\newtheorem{lemma}{Lemma}[section]
\newtheorem{thm}[lemma]{Theorem}
\newtheorem{prop}[lemma]{Proposition}
\newtheorem{cor}[lemma]{Corollary}
\theoremstyle{definition}
\newtheorem{defn}[lemma]{Definition}
\newtheorem{rem}[lemma]{Remark} 
\newtheorem{conv}[lemma]{Convention} 
\theoremstyle{definition}
\newcommand{\N}{\ensuremath {\mathbb{N}}}
\newcommand{\R} {\ensuremath {\mathbb{R}}}
\newcommand{\G} {\ensuremath {\mathbb{G}}}
\newcommand{\st}{\ensuremath {\, ^* }}
\newcommand{\calP} {\ensuremath {\mathcal{P}}}
\newcommand{\calU} {\ensuremath {\mathcal{U}}}
\begin{document}

\begin{frontmatter}

\title{Separable and tree-like asymptotic cones of groups}
\author{Alessandro Sisto\fnref{curaddr}}
\ead{sisto@maths.ox.ac.uk}
\fntext[curaddr]{Current address: Mathematical Institute, St. Giles 24-29, Oxford OX1 3LB, United Kingdom}

\address{Scuola Normale Superiore, Piazza dei Cavalieri 7, 56127 Pisa, Italy}

\begin{abstract}
Using methods from nonstandard analysis, we will discuss which metric spaces can be realized as asymptotic cones. Applying the results we will find in the context of groups, we will prove that a group with ``a few'' separable asymptotic cones is virtually nilpotent, and we will classify the real trees appearing as asymptotic cones of (not necessarily hyperbolic) groups. 
 
\end{abstract}

\begin{keyword}
Nonstandard \sep Asymptotic cone \sep Virtually nilpotent \sep Real tree

\end{keyword}

\end{frontmatter}

\section{Introduction}
The asymptotic cones of a metric space are obtained ``rescaling the metric by an infinitesimal factor'', in such a way that ``infinitely far away'' points become close, while points which are not far enough are identified.
\par
They have been introduced by Gromov in~\cite{Gr1} in the proof that a (finitely generated) group of polynomial growth is virtually nilpotent. Van den Dries and Wilkie gave a different and more general definition in~\cite{vDW}, where they slightly generalize and simplify the proof of the aforementioned result of Gromov.
\par
Since then, asymptotic cones have been used in several contexts, such as the proof of quasi-isometric rigidity results for cocompact lattices in higher rank semisimple groups (\cite{KlL}), fundamental groups of Haken manifolds (\cite{KaL1},~\cite{KaL2}), relatively hyperbolic groups (\cite{Dr2}) and others. Also, there is a close connection between many quasi-isometric invariants for groups (e.g. growth and order of Dehn functions) and the topology and geometry of the asymptotic cones, see~\cite{Dr1} for a survey.
\par
To define the asymptotic cones formally we will use nonstandard methods, which are powerful tools to formally deal with concepts such as ``infinitesimals'', ``infinite numbers'', ``points infinitely far away'', etc.
\par
We will then use those methods to investigate the following question: which metric spaces can be obtained as an asymptotic cone of another metric space or of a group? Our results rely on the fact that an internal set (i.e. an ultraproduct of a sequence of sets) is either finite or has cardinality at least $2^{\aleph_0}$ (Lemma~\ref{finuncount:lem}), and they often reflect this dichotomy. Perhaps this lemma inspired Gromov (see~\cite{Gr2}) to ask if it is true that an asymptotic cone of a group has either trivial or uncountable fundamental group (however, this is not true, see~\cite{OOS}).
\par
One of the main results we will prove is the following: 

\begin{thm}\label{proper:thm}
\begin{enumerate}
\item
If the separable metric space $X$ is an asymptotic cone, then $X$ is proper.
\item
Suppose that for some $\mu\ll\nu$ and some $p\in\st Y$ each asymptotic cone of the metric space $Y$ with scaling factor $\nu'$ such that $\mu\leq\nu'\leq\nu$ and basepoint $p$ is separable and that $X=C(Y,p,\nu)$ is homogeneous. Then $X$ has finite Hausdorff and Minkowski dimension. 
\end{enumerate}
\end{thm}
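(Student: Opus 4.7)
For part (1), since asymptotic cones are always complete, it suffices to show that closed balls in $X$ are totally bounded. If not, some closed ball $B(q,r) \subseteq X$ contains an infinite $\varepsilon$-separated sequence $\{q_n\}$. Choose representatives $\hat q_n \in \st Y$ with $d(\hat q_n, \hat p) \le (r+1)\nu$ and $d(\hat q_n, \hat q_m) \ge (\varepsilon/2)\nu$ for $n \ne m$. For every standard $M$ the internal statement ``there exist $x_1,\dots,x_M \in \st B(\hat p,(r+1)\nu)$ pairwise $(\varepsilon/2)\nu$-separated'' is witnessed by $\hat q_1,\dots,\hat q_M$, so by overspill it holds for some nonstandard $M$. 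The resulting internal set $A$ has internal cardinality $M$, hence by Lemma~\ref{finuncount:lem} external cardinality at least $2^{\aleph_0}$. Distinct elements of $A$ descend to points of $X$ at mutual distance $\ge \varepsilon/2$, producing $2^{\aleph_0}$ pairwise $(\varepsilon/2)$-separated points in $X$; but a separable metric space contains at most countably many pairwise $\delta$-separated points for any $\delta>0$, a contradiction.

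For part (2), part (1) applies at each scale, so $\tilde X_t := C(Y,p,\nu/t)$ is proper for every $t \in [1,\nu/\mu]$, possibly nonstandard. Let $S(t,M)$ denote the internal predicate ``there exist $x_1,\dots,x_M \in \st B(\hat p,2\nu/t)$ with $d(x_i,x_j) > \nu/(3t)$ for $i \ne j$''. I claim that the maximum cardinality of a $(1/2)$-separated subset of $B_{\tilde X_t}(p,1)$ is bounded by some standard $C$, uniformly in $t \in [1,\nu/\mu]$. Otherwise, for each $C \in \N$ there would be some $t_C$ admitting $C+1$ pairwise $(1/2)$-separated points in $B_{\tilde X_{t_C}}(p,1)$; their lifts witness $S(t_C,C+1)$, thanks to the slack built into the relaxed constants $2\nu/t$ and $\nu/(3t)$. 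The internal sets $U_C := \{t \in [1,\nu/\mu] : S(t,C+1)\}$ are then nonempty and decreasing, so by $\aleph_1$-saturation some $t^*$ lies in every $U_C$. Then $S(t^*,M)$ holds for every standard $M$, hence by overspill for some nonstandard $M$; Lemma~\ref{finuncount:lem} upgrades the corresponding internal set to external cardinality at least $2^{\aleph_0}$, whose image is an uncountable pairwise $(1/3)$-separated subset of $B_{\tilde X_{t^*}}(p,2)$, contradicting separability of $\tilde X_{t^*}$.

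The uniform bound yields, for standard $t \ge 1$, $N(B_X(p,1/t), 1/(2t)) \le C$. Using homogeneity of $X$ to iterate---each ball of radius $2^{-k}$ is isometric to $B(p,2^{-k})$ and hence coverable by $C$ balls of radius $2^{-k-1}$---one obtains $N(B_X(p,1),2^{-n}) \le C^n$, so $\overline{\dim}_M X \le \log_2 C < \infty$. Since Hausdorff dimension is dominated by upper Minkowski dimension, the conclusion follows.

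The main obstacle is the bookkeeping in part (2): the separation threshold in $S(t,M)$ is chosen strictly smaller than $\nu/(2t)$ and the ball radius strictly larger than $\nu/t$, so that any $(1/2)$-separated configuration in $B_{\tilde X_t}(p,1)$ lifts to a valid witness of $S(t,M)$ modulo infinitesimal slack, while any witness projects back to a pairwise $(1/3)$-separated subset of $B_{\tilde X_{t}}(p,2)$. With this slack in place the argument is driven by a double application of Lemma~\ref{finuncount:lem}: first combining $\aleph_1$-saturation with overspill to produce a pathological scale at which $S(t^*,M)$ holds for nonstandard $M$, and then using the finite-or-uncountable dichotomy to derive an uncountable separated set at that scale.
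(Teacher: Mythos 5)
Your proof is correct and follows essentially the same route as the paper: lift separated configurations to $\st Y$, combine overspill/saturation with the finite-or-$2^{\aleph_0}$ dichotomy for internal sets (Lemma~\ref{finuncount:lem}) to contradict separability, and in part (2) locate a bad intermediate scaling factor in $[\mu,\nu]$ --- the paper does this via the counting function $F_{\st Y}(p,0,2r\nu,r\nu/8)$ and an infinitesimal $\xi$ with $\xi\nu\geq\mu$, you via saturation applied to the internal sets $U_C$ of bad scales, which is only a difference of bookkeeping. You additionally spell out the homogeneity/doubling iteration and the domination of Hausdorff by Minkowski dimension, steps the paper leaves implicit.
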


For example, a corollary of point $(1)$ is that the separable Hilbert space cannot appear as an asymptotic cone.
\par
Minkowski dimension is relevant to our setting because Point proved in~\cite{Po} that a group with one proper asymptotic cone of finite Minkowski dimension is virtually nilpotent. Combining this result with the theorem above we get the following (Corollary~\ref{sepvirtnilp:cor}).

\begin{thm}
If there exist $\nu_1\ll\nu_2$ such that all the asymptotic cones of the group $G$ with scaling factor $\nu\in[\nu_1,\nu_2]$ are separable, then $G$ is virtually nilpotent.
\end{thm}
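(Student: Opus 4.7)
The plan is to combine Theorem~\ref{proper:thm} with Point's result \cite{Po} almost mechanically: the only genuine task is to check the hypotheses of Theorem~\ref{proper:thm}(2) for the group setting, after which a suitable asymptotic cone of $G$ is proper and of finite Minkowski dimension, and Point's theorem gives the conclusion.

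Concretely, I would fix the basepoint to be the identity $e \in G$ and consider $X = C(G, e, \nu_2)$. Because $G$ acts on itself by isometries and this action descends to transitive isometric actions on every asymptotic cone, $X$ is homogeneous; moreover, up to isometry the asymptotic cone of a group does not depend on the choice of basepoint, so the hypothesis ``all asymptotic cones with scaling $\nu \in [\nu_1,\nu_2]$ are separable'' furnishes separability for every $C(G, p, \nu')$ with $p \in \st G$ and $\nu_1 \le \nu' \le \nu_2$. In particular, putting $\mu = \nu_1$ and $\nu = \nu_2$ and $p = e$, both hypotheses of Theorem~\ref{proper:thm}(2) are satisfied.

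Applying Theorem~\ref{proper:thm}(1) to $X$, which is separable by assumption, yields that $X$ is proper. Applying Theorem~\ref{proper:thm}(2) with the setup above yields that $X$ has finite Minkowski dimension. Thus $G$ admits at least one asymptotic cone that is simultaneously proper and of finite Minkowski dimension.

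At this point the hypothesis of Point's theorem \cite{Po} is met, and we conclude that $G$ is virtually nilpotent. The main (in fact, only) subtlety I expect is the bookkeeping about basepoints and scaling factors: one must be careful that the interval hypothesis $\nu \in [\nu_1,\nu_2]$ from Theorem~\ref{proper:thm}(2) is really needed (not just the endpoint $\nu_2$), since Theorem~\ref{proper:thm}(2) requires separability throughout an interval of scaling factors in order to control the Hausdorff/Minkowski dimension — but this is exactly what the hypothesis of the corollary provides, so the argument goes through without any further work.
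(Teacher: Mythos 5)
Your proposal is correct and follows exactly the paper's route: apply Theorem~\ref{proper:thm} (with $\mu=\nu_1$, $\nu=\nu_2$, using homogeneity of group cones) to produce one asymptotic cone of $G$ that is proper and of finite Minkowski dimension, then invoke Point's theorem from~\cite{Po}. The paper's own proof is just a one-line version of this argument, with the hypothesis-checking you spell out left implicit.
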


Van den Dries an Wilkie asked if a group with \emph{one} proper asymptotic cone needs to be virtually nilpotent (see~\cite[Remark 6.4-(3)]{vDW}). The theorem above does not provide the answer to that question, but notice that ``very few'' of the asymptotic cones of $G$ are involved in the statement.
\par
We will also prove two results about real trees appearing as asymptotic cones (Corollary~\ref{nocount:cor} and Proposition~\ref{isolated:prop}), that will allow us to show that there are 3 possible real trees appearing as the asymptotic cone of a group, up to isometry (Corollary~\ref{3trees:cor}). More precisely:
\begin{thm}
If the real tree $X$ is the asymptotic cone of a group, then it is a point, a line or a tree with valency $2^{\aleph_0}$ at each point.
\end{thm}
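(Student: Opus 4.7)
The plan is to combine homogeneity of asymptotic cones of groups with the two preceding results Corollary~\ref{nocount:cor} and Proposition~\ref{isolated:prop}, and then carry out a short case analysis on the common valency of the tree.

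First I would observe that any asymptotic cone $X$ of a group $G$ is homogeneous: the left multiplication action of (the internal extension of) $G$ on its Cayley graph descends through the ultralimit construction to a transitive isometric action on $X$. In particular, every point of the real tree $X$ has the same valency, call it $v$. By Corollary~\ref{nocount:cor}, the valency at any point of an asymptotic cone is either finite or at least $2^{\aleph_0}$, so $v$ is finite or equals $2^{\aleph_0}$. The case $v=2^{\aleph_0}$ is precisely the third alternative in the statement.

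It then remains to analyse the case $v$ finite. Valency $0$ everywhere forces $X$ to be a single point. Valency $1$ everywhere is impossible whenever $X$ has more than one point, since any interior point of a nontrivial geodesic has valency at least $2$, contradicting homogeneity. Valency $2$ everywhere means $X$ has no branching, so as a real tree it embeds isometrically in $\R$ as a convex subset; homogeneity then upgrades this to $X\cong\R$, a line. The remaining and genuinely delicate case is $v\geq 3$ finite, which I would exclude by invoking Proposition~\ref{isolated:prop}: in such a tree, at each point the $v$ branching directions can be uniformly separated from each other, making each branch ``isolated'' in the sense required by that proposition, which is forbidden in an asymptotic cone.

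The main obstacle is that last step: matching the geometric picture of a homogeneous real tree of finite valency $\geq 3$ with the precise isolated-branch hypothesis of Proposition~\ref{isolated:prop}. The other cases are essentially bookkeeping around homogeneity and standard real-tree geometry, whereas here one has to check that in a homogeneous finite-valence tree the branches are indeed isolated in the metric/asymptotic sense that the proposition rules out, so that homogeneity plus local finiteness of branching collapses under the asymptotic-cone constraint.
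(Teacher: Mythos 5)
Your overall route is the paper's: homogeneity of group cones, Corollary~\ref{nocount:cor}, Proposition~\ref{isolated:prop}, and a case analysis on the common valency $v$. But the step you yourself flag as the main obstacle, the case $3\leq v<\infty$, is built on a misreading of Proposition~\ref{isolated:prop}. That proposition does not ``forbid'' branches being isolated or uniformly separated from each other; it asserts that if an asymptotic cone is a real tree with extendable geodesics and \emph{finite} valency at $p$, then $p$ is isolated from the \emph{other branching points} of the tree. The correct application is therefore immediate and requires no delicate matching of hypotheses: if $v\geq 3$ is finite, homogeneity makes every point a branching point, so no point has a neighbourhood free of other branching points, which directly contradicts the conclusion of the proposition. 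There is nothing to verify about ``uniformly separated branching directions''; as written, your argument invokes the proposition for a statement it does not make, and you acknowledge not having closed this step.

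A second gap: you pass from ``valency at least $2^{\aleph_0}$'' (which is all Corollary~\ref{nocount:cor} provides) to ``valency equal to $2^{\aleph_0}$'' with no justification, while the theorem asserts the exact value. You need the matching upper bound, which the paper states explicitly before deducing the theorem: the Cayley graph of a finitely generated group has cardinality at most $2^{\aleph_0}$, and since $I=\N$ its nonstandard extension, hence any of its asymptotic cones, also has cardinality at most $2^{\aleph_0}$; as each connected component of $X\backslash\{p\}$ contains at least one point, the valency at every point is at most $2^{\aleph_0}$. With these two repairs the rest of your case analysis ($v=0,1,2$ yielding a point or a line, using completeness and homogeneity) matches the paper's argument.
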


Notice that if a group is finitely presented and has one asymptotic cone which is a real tree, then it is hyperbolic (see the appendix of~\cite{OOS} by Kapovich and Kleiner). Therefore, in the case of finitely presented groups, this classification follows from the results in~\cite{DP}. However, there are examples of groups such that just some of their asymptotic cones are real trees (see~\cite{OOS}) and our results can be applied to metric spaces in general.
\par
Finally, we will prove that all proper metric spaces can be realized as asymptotic cones (Theorem~\ref{positive:thm}).
\par
The definition of asymptotic cone we will present is not the one usually found in the literature. In fact, use of nonstandard methods tends to be avoided and a definition based on ultrafilters is usually given, even though the ultrafilters based definition is just a restatement of the nonstandard definition. The author thinks that the nonstandard definition is far more convenient because, besides providing a lighter formalism, it allows to directly apply basic results about nonstandard extensions, particular cases of which ought to be proved in most arguments if the other definition is used. Also, the nonstandard definition is ``philosophically'' closer to the idea of looking at a metric space from infinitely far away, while the other one is closer to the idea of Gromov of convergence of rescaled metric spaces, which is more complicated to ``visualize''.

\section{Basic notation and definitions}

If $X$ is a metric space, $x\in X$ and $r\in\R^+$, we will denote by $B(x,r)$ (resp. $\overline{B}(x,r)$) the open (resp. closed) ball with center $x$ and radius $r$.
\par
Recall that a metric space $X$ is proper if closed balls in $X$ are compact.
\par

A geodesic (parametrized by arc length) in the metric space $X$ is a curve $\gamma:[0,l]\to X$ such that $d(\gamma(t),\gamma(s))=|t-s|$ for each $t,s\in[0,l]$. The metric space $X$ is geodesic if for each $x,y\in X$ there exists a geodesic from $x$ to $y$.

\begin{defn}
A tripod is a geodesic triangle such that each side is contained in the union of the other two sides.
\par
A real tree is a geodesic metric space such that all its geodesic triangles are tripods.
\end{defn}

\begin{conv}
From now on all real trees are implied to be complete metric spaces.
\end{conv}

If $T$ is a real tree and $p\in T$, the valency of $T$ at $p$ is the number of connected components of $T\backslash\{p\}$.
\par
\begin{defn}
A proper homogeneous metric space $X$ has finite Minkowski dimension if, given $x\in X$, $\overline{B}(x,1)$ has the following property. There exists $p>0$ and sequences $\{k_n\}_{n\in \N}$, $\{r_n\}_{n\in\N}$ with $\{r_n\}$ converging to $0$ and $\{k_n r_n^p\}$ bounded such that $\overline{B}(x,1)$ can be covered by $k_n$ balls of radius $r_n$ centered in $\overline{B}(x,1)$.
\end{defn}

\section{Nonstandard extensions}

For the following sections we will need basic results about the theory of nonstandard extensions. The treatment will be rather informal, for a more formal one see for example~\cite{Go}. Let us start with a motivating example. It is quite evident that being allowed to use non-zero infinitesimals (i.e. numbers $x$ different from $0$ such that $|x|<1/n$ for each $n\in\N^+$) would be very helpful in analysis. Unfortunately, $\R$ does not contain infinitesimals. The idea is therefore to find an extension of $\R$, denoted by $\st\R$, which contains infinitesimals. Let us construct such an extension.
\par
\begin{defn}
Let $I$ be any infinite set. A filter $\calU\subseteq \calP(I)$ on $I$ is a collection of subsets of $I$ such that for each $A,B\subseteq I$
\begin{enumerate}
\item
If $A$ is finite, $A\notin\calU$ (in particular $\emptyset\notin\calU$),
\item
$A,B\in\calU\Rightarrow A\cap B\in\calU$,
\item
$A\in\calU,B\supseteq A\Rightarrow B\in\calU$.
\par
An ultrafilter is a filter satisfying the further property:
\item
$A\notin\calU\Rightarrow A^c\in\calU$.
\end{enumerate}

\end{defn}
This is not the standard definition of ultrafilter: the usual one requires only that $\emptyset\notin\calU$ instead of property $(1)$, and the ultrafilters not containing finite sets are usually called non-principal ultrafilters. However, we will only need non-principal ultrafilters.
\par
Fix any infinite set $I$. An example of filter on $I$ is the collection of complements of finite sets. An easy application of Zorn's Lemma shows that there actually exists an ultrafilter $\calU$, which extends the mentioned filter. Fix such an ultrafilter. We are ready to define $\st\R$.

\begin{defn}
Define the following equivalence relation $\sim$ on $\R^I=\{f:I\to\R\}$:
$$f\sim g\iff \{i\in I:f(i)=g(i)\}\in\calU.$$
Let $\st\R$ be the quotient set of $\R^I$ modulo this relation.
\end{defn}

It is easily seen using the properties of an ultrafilter (in fact, of a filter) that $\sim$ is indeed an equivalence relation. We can define the sum and the product on $\st\R$ componentwise, as this is easily seen to be well defined. Using also property $(4)$, we obtain that $\st\R$, equipped with this operations, is a field. We can also define an order $\st\leq$ on $\st\R$ in the following way:
$$[f]\ \st\leq [g]\iff \{i\in I:f(i)\leq g(i)\}\in\calU.$$
Using the properties of ultrafilters it is easily seen that this is a total order on $\st\R$ (property $(4)$ is required only to show that it is total), and that $\st\R$ is an ordered field. An embedding of ordered fields $\R\hookrightarrow\st\R$ can be defined simply by $r\mapsto f_r$, where $f_r$ is the function with constant value $r$. We can identify $\R$ with its image in $\st\R$.
\par
Notice that in the definition we gave of $\st\R$ we can substitute $\R$ with any set $X$. Doing so, we obtain the definition of $\st X$, which can be considered as an extension of $X$, just as we considered $\st\R$ as an extension of $\R$. In the case of $\st\R$, we showed that this extension preserves the basic properties of $\R$, i.e. being an ordered field. The idea is that this is true in general, as we will see.
\par
Before proceeding, notice that if $f:X\to Y$ is any function, we can define componentwise a function $\st f:\st X\to\st Y$ (which is well defined), called the nonstandard extension of $f$. This function coincides with $f$ on (the subset of $\st X$ identified with) $X$. Also relations have nonstandard extensions (see the definition of $\st\leq$). Let us give another definition (in a quite informal way), and then we will see which properties are preserved by nonstandard extensions.

\begin{defn}
A formula $\phi$ is bounded if all quantifiers appear in expressions like $\forall x\in X$, $\exists x \in X$ (bounded quantifiers).
\par
The nonstardard interpretation of $\phi$, denoted $\st\phi$, is obtained by adding $\st$ before any set, relation or function (not before quantified variables).
\end{defn}

An example will make these concepts clear: consider
$$\forall X\subseteq \N, X\neq\emptyset\ \exists x\in X\ \forall y\in X\ x\leq y,$$
which expresses the fact that any non-empty subset of $\N$ has a minimum. This formula is not bounded, because it contains ``$\forall X\subseteq \N$''. However, it can be turned into a bounded formula by substituting ``$\forall X\subseteq \N$'' with ``$\forall X\in\calP(\N)$''. The nonstandard interpretation of the modified formula reads
$$\forall X\in\st\calP(\N), X\neq\st\emptyset\ \exists x\in X\ \forall y\in X\ x\st\leq y.\ \ \ \ \ (1)$$
These definitions are fundamental for the theory of nonstandard extensions in view of the following theorem, which will be referred to as the transfer principle.
\begin{thm}\label{transf:thm}
(\L o\v{s} Theorem) Let $\phi$ be a bounded formula. Then $\phi\iff\st\phi$.
\end{thm}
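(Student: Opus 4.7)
The plan is to prove, by induction on the complexity of the bounded formula $\phi$, the following slightly stronger statement. For any bounded formula $\phi(x_1,\ldots,x_n)$ and any elements $[f_1],\ldots,[f_n]$ of the appropriate ultrapowers,
$$\st\phi([f_1],\ldots,[f_n])\iff\{i\in I:\phi(f_1(i),\ldots,f_n(i))\}\in\calU.$$
The theorem as stated follows by specialising to a sentence (no free variables), since then the index set on the right is either all of $I\in\calU$ or the empty set $\notin\calU$.

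First I would dispatch the atomic case. Equality, the order $\leq$, and more generally any relation $R$ as well as any function $f$ were defined componentwise on nonstandard extensions (the passage in the text from $\leq$ to $\st\leq$ is the typical case). Thus the base of the induction is essentially just the definition of $\st R$ and $\st f$, together with the observation that the truth value of an atomic formula depends only on the equivalence classes modulo $\sim$, which is itself an $\calU$-indexed condition.

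Next I would treat the Boolean connectives using the filter/ultrafilter properties listed in the definition. For $\phi\wedge\psi$, the relevant index set is the intersection of the two individual index sets, and property (2) gives closure under intersection; property (3) handles supersets arising from rewriting. For the negation $\neg\phi$, property (4) states precisely that $A\notin\calU\iff A^c\in\calU$, which is the content of the inductive step. Disjunction and implication reduce to $\wedge$ and $\neg$.

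The main obstacle is the quantifier case, which also explains the need to restrict to bounded formulas. For $\exists x\in Y\,.\,\phi(x,\ldots)$, the forward direction is straightforward: given a witness $[g]\in\st Y$, the index set $\{i:g(i)\in Y\text{ and }\phi(g(i),f_1(i),\ldots)\}$ belongs to $\calU$ by the inductive hypothesis and property (2). The reverse direction is the delicate step: assuming that $A=\{i\in I:\exists x\in Y,\;\phi(x,f_1(i),\ldots,f_n(i))\}\in\calU$, one must manufacture a single function $g\colon I\to Y$ such that $g(i)$ is a witness for every $i\in A$. This is done by applying the Axiom of Choice to the family of non-empty sets $\{x\in Y:\phi(x,f_1(i),\ldots,f_n(i))\}$, $i\in A$, and extending $g$ arbitrarily off $A$; then $[g]\in\st Y$ works by the inductive hypothesis. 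The universal case is obtained by rewriting $\forall x\in Y\,.\,\phi$ as $\neg\exists x\in Y\,.\,\neg\phi$ and using the negation step. Crucially, boundedness of the quantifier is what allows the Choice argument to deliver a function into a fixed set $Y$; without it, no such global choice function could be formed and the induction would collapse.
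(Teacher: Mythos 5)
Your proposal is correct in outline, but note that the paper itself does not prove this theorem at all: the treatment of nonstandard extensions there is deliberately informal and the transfer principle is simply quoted, with the reader referred to Goldblatt's book for a rigorous development. What you have written is the standard \L o\v{s} induction on the complexity of $\phi$, strengthened to formulas with parameters in the ultrapower, and all the key ingredients are in place: the componentwise definition of $\st R$ and $\st f$ for the atomic case, the filter properties for conjunction, the ultrafilter property for negation, and the choice-function argument for the bounded existential quantifier. Three small points deserve tightening. First, in the setting of this paper a bounded quantifier need not range over a fixed standard set $Y$: in the sample formula $\forall X\in\st\calP(\N)\ \exists x\in X\ \dots$ the bound of the inner quantifier is itself a quantified variable, interpreted by some $[h]$ with $h\colon I\to\calP(\N)$; your choice argument still works, since one picks $g(i)\in h(i)$ a witness for each relevant $i$ and $g$ then takes values in the standard set $\N$, with $[g]$ a member of $[h]$ because $\{i:g(i)\in h(i)\}\in\calU$, but this case should be stated explicitly, as it is exactly what makes quantification over internal sets and internal functions behave correctly. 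Second, and relatedly, the atomic case must include the membership relation between elements and elements of $\st\calP(X)$ (and evaluation of elements of $\st(Y^X)$), again interpreted componentwise; this is what underlies the paper's identification of internal sets with subsets of $\st X$. Third, property $(4)$ as stated gives only $A\notin\calU\Rightarrow A^c\in\calU$; the converse direction needed for the negation step follows from properties $(1)$ and $(2)$, since $A\cap A^c=\emptyset\notin\calU$. With these refinements your argument is the standard complete proof of the theorem the paper leaves to the literature.
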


This theorem roughly tells us that the nonstandard extensions have the same properties, up to paying attention to state these properties correctly (for example, replacing ``$\forall X\subseteq \N$'' with ``$\forall X\in\calP(\N)$''). Easy consequences of this theorem are, for example, that the nonstandard extension $(\st G,\st\cdot)$ of a group $(G,\cdot)$ is a group, or that the nonstandard extension $(\st X,\st d)$ of a metric space $(X,d)$ is a $\st\R-$metric space (that is $\st d:\st X\times\st X\to\st\R$ satisfies the axioms of distance, which make sense as $\st\R$ is in particular an ordered abelian group). To avoid too many $\st$'s, we will often drop them before functions or relations, for example we will denote the ``distance'' on $\st X$ as above simply by ``$d$''.
In view of the transfer principle, the following definition is very useful:

\begin{defn}
$A\subseteq\st X$ will be called \emph{internal} subset of $X$ if $A\in\st\calP(X)$. An internal set is an internal subset of some $\st X$.
\par
$f:\st X\to\st Y$ will be called \emph{internal} function if $f\in\st(Y^X)=\st\{f:X\to Y\}$.
\end{defn}

One may think that ``living inside the nonstandard world'' one only sees internal sets and functions, and therefore, by the transfer principle, one cannot distinguish the standard world from the nonstandard world.
\par
Notice that $\st \calP(X)\subseteq \calP(\st X)$ by the transfer principle applied to the formula
$$\forall A\in\calP(X)\ \forall a\in A\ a\in X.$$
Also, $\{\st A: A\in\calP(X)\}\subseteq \st \calP(X)$, by the transfer principle applied to $(\forall a\in A,\ a\in X)\Rightarrow A\in\calP(X)$. To sum up
$$\{\st A: A\in\calP(X)\}\subseteq \st \calP(X)\subseteq \calP(\st X).$$
Analogously, in the case of maps we have
$$\{\st f: f\in Y^X\}\subseteq \st (Y^X) \subseteq (\st Y)^{\st X}.$$
However, the equalities are in general not true, as we will see.
\par
Another example: the transfer principle applied to formula $(1)$, which tells that each non-empty subset of $\N$ has a minimum, gives that each \emph{internal} non-empty subset of $\st X$ has a minimum ($\st\emptyset=\emptyset$ as, for each set $A$, $\exists a\in A\iff \exists a\in\st A$).
\par
\L o\v{s} Theorem alone is not enough to prove anything new. In fact, it holds for the trivial extension, that is, if we set $\st X=X$, $\st f=f$ and $\st R=R$ for each set $X$, function $f$ and relation $R$. However, the nonstandard extensions we defined enjoy another property, which will be referred to as $\aleph_0-$saturation, or simply saturation. First, a definition, and then the statement.

\begin{defn}
A collection of sets $\{A_j\}_{j\in J}$ has the finite intersection property (FIP) if for each $n\in\N$ and $j_0,\dots,j_n\in J$, we have $A_{j_0}\cap\dots\cap A_{j_n}\neq\emptyset$.
\end{defn}

\begin{thm}\label{sat:thm}
Suppose that the collection of internal sets $\{A_n\}_{n\in\N}$ has the FIP. Then $\bigcap_{n\in\N} A_n\neq \emptyset$.
\end{thm}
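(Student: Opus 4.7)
The plan is to produce an element of $\bigcap_n A_n$ via a diagonal construction exploiting countable incompleteness of the ultrafilter $\calU$.

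First I reduce to a decreasing nested family. Finite intersections of internal sets are internal by transfer of the analogous property of $\calP(X)$, so replacing $A_n$ by $B_n = A_0 \cap \dots \cap A_n$ yields a nested chain of non-empty internal sets (the FIP hypothesis giving non-emptiness), and any element of $\bigcap_n B_n$ automatically lies in $\bigcap_n A_n$. So I may assume $A_{n+1} \subseteq A_n \neq \emptyset$ throughout. Writing $A_n = [A_n^{(\cdot)}]$, I then replace the representing sequence of $A_n$ by $i \mapsto \bigcap_{k \leq n} A_k^{(i)}$; this finite intersection does not alter the equivalence class but makes $A_n^{(i)} \supseteq A_{n+1}^{(i)}$ hold for \emph{every} $i \in I$ rather than only $\calU$-almost everywhere. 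Let $J_n = \{i \in I : A_n^{(i)} \neq \emptyset\} \in \calU$.

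Now I invoke countable incompleteness: in the setting of the paper, where $\calU$ extends the cofinite filter on a countable $I$, the sets $U_n = \{i \in I : i \geq n\}$ lie in $\calU$ yet satisfy $\bigcap_n U_n = \emptyset$. Set $K_n = J_n \cap U_n \in \calU$ and define $n(i) = \max\{n : i \in K_n\}$ for each $i \in I$; this maximum is a well-defined finite integer because $i \in K_n$ forces $i \geq n$. I pick $x^{(i)} \in A_{n(i)}^{(i)}$ whenever $n(i) \geq 0$ (possible since $i \in J_{n(i)}$), and arbitrarily otherwise, and set $x = [x^{(\cdot)}]$. For any fixed $m$, if $i \in K_m$ then $n(i) \geq m$, and pointwise nesting gives $x^{(i)} \in A_{n(i)}^{(i)} \subseteq A_m^{(i)}$; hence $\{i : x^{(i)} \in A_m^{(i)}\} \supseteq K_m \in \calU$, which is the definition of $x \in A_m$.

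The main obstacle is the countable incompleteness step: one must arrange that $\calU$ admits a nested family of members with empty intersection, so that the diagonal index $n(i)$ is forced to be finite. This is automatic for non-principal ultrafilters on a countable index set, which is the setting relevant to asymptotic cones, and it is precisely the reason the statement restricts to \emph{countable} collections of internal sets; the pointwise (as opposed to $\calU$-almost everywhere) nesting of representatives is a secondary bookkeeping subtlety easy to miss.
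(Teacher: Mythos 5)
Your proof is correct, but note that the paper itself offers no proof of this statement to compare against: Theorem~\ref{sat:thm} is quoted as a background fact of nonstandard analysis (the reader is referred to Goldblatt), so you have supplied the standard argument that the paper leaves implicit. Your route is exactly the classical one: reduce to a nested chain $B_n=A_0\cap\dots\cap A_n$ of non-empty internal sets, fix representatives that are nested pointwise, and diagonalize using a decreasing family $U_n\in\calU$ with $\bigcap_n U_n=\emptyset$ to force the index $n(i)$ to be finite; the verification that $\{i:x^{(i)}\in B_m^{(i)}\}\supseteq K_m\in\calU$, hence $[x]\in B_m$ for every $m$, is the \L o\v{s}-type membership criterion and is carried out correctly. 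You also correctly isolate the real hypothesis: countable incompleteness of $\calU$. Strictly speaking the theorem is stated in the paper before the convention $I=\N$ is fixed, and for an arbitrary infinite index set an ultrafilter extending the cofinite filter need not be countably incomplete, so your explicit appeal to $I$ countable (where $U_n=\{i\geq n\}$ works) is the right way to make the statement true as used; this is a point the paper glosses over. Two cosmetic remarks: the phrase ``whenever $n(i)\geq 0$'' should read ``whenever $\{n: i\in K_n\}\neq\emptyset$'' (when that set is empty the maximum is undefined, and you rightly choose $x^{(i)}$ arbitrarily there, which affects nothing since such $i$ lie outside every $K_m$); and one should observe that $J_n\in\calU$ follows from $B_n\neq\emptyset$ by picking any $[f]\in B_n$, which you implicitly do. Neither affects the validity of the argument.
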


Let us use this theorem to prove that $\st\R$ contains infinitesimals. It is enough to consider the collection of sets $\{\st (0,1/n)\}_{n\in\N^+}$ and apply the theorem to it. Notice that for $n\in\N^+$, $\st (0,1/n)\in\st\calP(\R)$ as it is of the form $\st A$ for $A\in\calP(\R)$.
More in general, however, for each $x,y\in\st\R$, $(x,y)\in\st\R$ (we should use a different notation for intervals in $\R$ and intervals in $\st\R$, but hopefully it will be clear from the context which kind of interval is under consideration). In fact, we can apply the transfer principle to the formula $\forall x,y\in \R\ (x,y)\in \calP(\R)$. To be more formal, ``$(x,y)\in \calP(\R)$'' should be substituted by
$$\exists A\in\calP(\R)\ \forall z\in\R\ (z\in A\iff x<z{\rm \ and\ }z<y).$$
\par
Notice that it can be proved similarly that $\st\N$ and $\st\R$ contain infinite numbers. We will need the following refinement of this:

\begin{lemma}\label{cof:lem}
\begin{itemize}
\end{itemize}
\begin{enumerate}
\item
Let $\{\xi_n\}_{n\in\N}$ be a sequence of infinitesimals. There exists an infinitesimal $\xi$ greater than any $\xi_n$.
\item
Let $\{\rho_n\}_{n\in\N}$ be a sequence of positive infinite numbers (in $\st\R$ or $\st\N$). There exists an infinite number $\rho$ smaller than any $\rho_n$.

\end{enumerate}

\end{lemma}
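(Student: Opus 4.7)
My plan is to derive both parts as direct applications of $\aleph_0$-saturation (Theorem~\ref{sat:thm}), applied to countable families of internal intervals in $\st\R$ (or $\st\N$) chosen so that any element of the total intersection is forced to have the required properties.

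For part (1), I would take, for each $n \in \N^+$, the set
\[
A_n = \{\, x \in \st\R : \xi_n < x,\ 0 < x < 1/n\,\} = (\max(\xi_n, 0),\, 1/n),
\]
which is internal by the transfer principle applied to $\forall a, b \in \R\ (a,b) \in \calP(\R)$, as already noted in the excerpt. To check the FIP for $A_{n_1}, \dots, A_{n_k}$, I would let $N = \max_i n_i$ and $\eta = \max(\xi_{n_1}, \dots, \xi_{n_k}, 0)$: since $\eta$ is a maximum of finitely many infinitesimals (together with $0$), $\eta$ is itself infinitesimal, so $\eta < 1/N$, and the interval $(\eta, 1/N)$ is a non-empty subset of every $A_{n_i}$. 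Saturation then yields some $\xi \in \bigcap_n A_n$; by construction $0 < \xi < 1/n$ for every $n \in \N^+$, so $\xi$ is a non-zero positive infinitesimal, and $\xi > \xi_n$ for every $n$.

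Part (2) is symmetric. I would set, for each $n \in \N^+$,
\[
B_n = \{\, x \in \st\R : n < x < \rho_n\,\},
\]
again an internal interval (replace $\st\R$ by $\st\N$ if the $\rho_n$ live in $\st\N$; the argument is identical). For the FIP, given $n_1, \dots, n_k$, let $N = \max_i n_i$ and $\rho^* = \min_i \rho_{n_i}$; the latter is still positive infinite, being a minimum of finitely many positive infinite numbers, so any standard integer $m$ with $N < m < \rho^*$ (for example $m = N+1$, since $\rho^* > N+1$) lies in each $B_{n_i}$. Saturation then produces $\rho \in \bigcap_n B_n$: it is larger than every standard $n \in \N$, hence infinite, and smaller than every $\rho_n$.

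I do not expect any real obstacle: both statements reduce to writing down the right countable family of internal intervals and then invoking Theorem~\ref{sat:thm}. The only minor subtlety, in part (1), is to guarantee that the resulting $\xi$ is strictly positive (hence genuinely a non-zero infinitesimal in the sense defined in the excerpt) even when all the $\xi_n$ happen to be non-positive; this is handled cleanly by the extra constraint $0 < x$ built into each $A_n$.
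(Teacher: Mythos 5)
Your proposal is correct and follows essentially the same route as the paper: the paper applies $\aleph_0$-saturation to the internal intervals $(\xi_n,1/(n+1))$ (and notes part (2) is analogous), exactly as you do, with your extra constraint $0<x$ being a harmless refinement since any element of the paper's intersection is already squeezed between an infinitesimal and $1/(n+1)$ and is thus infinitesimal anyway.
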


\begin{proof}
Let us prove $(1)$, the proof of $(2)$ being very similar.
\par
The collection $\{(\xi_n,1/(n+1)) \}_{n\in\N}$ of internal subsets of $\st\R$ has the FIP. An element $\xi\in\bigcap (\xi_n,1/(n+1))$ has the required properties.
\end{proof}

\begin{conv}
The definition of the nonstandard extensions depends on the infinite set $I$ and the ultrafilter $\calU$. From now on we set $I=\N$ and we fix an ultrafilter $\calU$ on $\N$, and we will consider the nonstandard extensions constructed from this data.
\end{conv}

\par
The reader is suggested to forget the definition of nonstandard extensions, as Theorem~\ref{transf:thm}, Theorem~\ref{sat:thm} and the remark below are all we need, and the definition will never be used again.
\begin{rem}
The nonstandard extension of a set of cardinality at most $2^{\aleph_0}$ has cardinality at most $2^{\aleph_0}$ (this is a consequence of the fact that we set $I=\N$).

\end{rem}

Now, a lemma which is frequently used when working with nonstandard extensions, usually referred to as overspill.

\begin{lemma}
Suppose that the internal subset $A\subseteq\st\R^+$ (or $A\subseteq\st\N$) contains, for each $n\in\N$, an element greater than $n$. Then $A$ contains an infinite number.

\end{lemma}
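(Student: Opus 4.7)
My plan is to invoke the saturation principle (Theorem~\ref{sat:thm}) directly, applied to a natural decreasing family of internal subsets of $A$ obtained by cutting $A$ off at larger and larger standard thresholds.

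First I would, for each $n\in\N$, introduce
\[
A_n \;=\; A\cap \st[n,\infty).
\]
The set $A_n$ is internal: $A$ is internal by hypothesis, $\st[n,\infty)$ is the nonstandard extension of the ordinary set $[n,\infty)\subseteq\R^+$ (respectively $\subseteq\N$), and the intersection of two internal sets is internal, by transfer applied to the set-theoretic formula defining intersection. The hypothesis of the lemma says exactly that for every $n\in\N$ there is some $a\in A$ with $a>n$, and any such $a$ lies in $A_n$; hence $A_n\neq\emptyset$.

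Next I would verify the finite intersection property: since the family $\{A_n\}_{n\in\N}$ is nested decreasing in $n$, any finite intersection $A_{n_0}\cap\dots\cap A_{n_k}$ equals $A_{\max_i n_i}$, which is nonempty by the previous step. Theorem~\ref{sat:thm} then produces an element $a\in\bigcap_{n\in\N}A_n$. Such an $a$ belongs to $A$ and satisfies $a\geq n$ for every $n\in\N$, i.e.\ $a$ is an infinite element of $A$, which is exactly the conclusion.

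The only real obstacle is a bookkeeping one: one must be genuinely careful that each $A_n$ is an \emph{internal} set and not just a subset of $\st\R^+$ (or $\st\N$) cut out by an external predicate, since saturation does not apply to arbitrary countable families of subsets. Once internality is established, overspill reduces to a single invocation of saturation, which matches its status in the text as a basic tool built on top of Theorems~\ref{transf:thm} and~\ref{sat:thm}.
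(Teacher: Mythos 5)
Your proof is correct and follows essentially the same route as the paper: both arguments apply the saturation theorem to a countable family with the finite intersection property built from $A$ and the standard-threshold tails (the paper uses the family $\{A\}\cup\{(n,+\infty)\}_{n\in\N}$ directly, while you intersect $A$ with each tail first, a purely cosmetic difference once the internality of the intersections is noted).
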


\begin{proof}
The collection of internal sets $\{A\}\cup\{(n,+\infty)\}_{n\in\N}$ has the FIP, therefore $\bigcap_{n\in\N}(n,\infty)\cap A\neq\emptyset$. (For clarity, here by $(n,\infty)$ we mean $\{x\in\st\R:x>n\}$.) An element in the intersection is what we were looking for.

\end{proof}

We will also need:

\begin{lemma}\label{intsubsX:lem}
Suppose that $A\subseteq X\subseteq \st X$ is internal. Then it is finite.
\end{lemma}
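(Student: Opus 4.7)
The plan is to argue by contradiction: assume $A$ is infinite and produce an element of $A$ that does not belong to the standard copy of $X$, contradicting $A\subseteq X$. By the transfer principle applied to ``every finite set has a cardinality'', $A$ has an internal cardinality $\nu\in\st\N$; and by transfer of the existence of a bijection between a finite set and an initial segment of $\N$, there is an internal bijection $b:[\nu]\to A$, where $[\nu]=\{k\in\st\N:1\leq k\leq\nu\}$. If $\nu\in\N$, then externally $|A|=\nu$ and we are done; so I may assume $\nu$ is an infinite hyperinteger. The elements $b(1),b(2),\ldots$ are then distinct elements of $A\subseteq X$, i.e.\ standard, while for infinite indices $k\in[\nu]$ we need to rule out $b(k)$ being standard.

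The transparent case is $X$ countable, say $X=\{x_n\}_{n\in\N}$: each set $A\setminus\{x_1,\ldots,x_n\}$ is internal (being an internal set minus a finite set of internal singletons), and nonempty because $A$ is infinite; the family is decreasing and thus has the FIP, so saturation (Theorem~\ref{sat:thm}) supplies some $y\in\bigcap_n(A\setminus\{x_1,\ldots,x_n\})$. But $y\in A\subseteq X$ means $y=x_m$ for some $m\in\N$, while $y\neq x_n$ for every $n$ forces $m$ to exceed every natural number, which is absurd. For general $X$ one cannot enumerate $X$ countably, and I fall back on the ultrapower construction hidden behind the scenes: writing $A=[(A_i)_{i\in\N}]$, the assumption that $\nu=|A|^{\st}$ is infinite is equivalent to $|A_i|$ being unbounded along $\calU$. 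After replacing $\calU$-sets, I inductively pick $f(i)\in A_i$ distinct from $\{f(1),\ldots,f(i-1)\}$, which is possible once $|A_i|\geq i$; then $f$ takes each value in $X$ at most finitely often. Consequently $[f]\in A$ (since $\{i:f(i)\in A_i\}\in\calU$) while $\{i:f(i)=x\}$ is finite, hence not in $\calU$, for every $x\in X$. Therefore $[f]$ is not a standard element, contradicting $A\subseteq X$.

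The main obstacle is the passage from ``$|A_i|$ unbounded along $\calU$'' to ``$|A_i|\geq i$ on a set in $\calU$'', needed to carry out the inductive diagonal choice. A concrete workaround is to use the auxiliary function $h(i)=\min(|A_i|,\lfloor\log i\rfloor)$: for each standard $n$, the set $\{i:h(i)\geq n\}$ contains $\{i:|A_i|\geq n\}\cap\{i:i\geq 2^n\}$, which lies in $\calU$ (the first factor by hypothesis, the second by cofiniteness), so $h$ also tends to infinity along $\calU$ while satisfying $h(i)\leq i$. The inductive choice of $f(i)$ avoiding the previous $h(i)$-many values then goes through, and the constructed $[f]\in A$ remains non-standard, completing the contradiction.
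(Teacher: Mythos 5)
The paper states this lemma without proof, so there is no argument of the author's to compare yours against; judged on its own terms, your strategy --- show that an infinite internal set must contain a nonstandard element --- is the right one, and your countable case is complete: enumerating $X=\{x_n\}$ and applying saturation (Theorem~\ref{sat:thm}) to the internal sets $A\setminus\{x_1,\dots,x_n\}$ does give the contradiction. Two remarks on the setup, though. Not every internal set is hyperfinite ($\st\R$ itself is internal), so the opening appeal to an internal cardinality $\nu\in\st\N$ and an internal bijection $b:[\nu]\to A$ is unjustified; it is also unnecessary, because the only fact you use later, namely that $\{i:|A_i|\ge n\}\in\calU$ for every $n\in\N$, follows directly from $A$ being (externally) infinite: if $|A_i|\le n$ on a set of $\calU$, then by transfer there is an internal surjection from $\{1,\dots,n\}$ onto $A$, and such a map is in particular an actual surjection, so $A$ would have at most $n$ elements.

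The genuine gap is the final step of the uncountable case. Your only stated reason for $[f]$ being nonstandard is that $f$ takes each value of $X$ at most finitely often, and that rests on $f$ being injective, i.e.\ on the unavailable hypothesis $|A_i|\ge i$ on a set of $\calU$. Once you pass to the workaround and avoid only $h(i)$ of the previously chosen values, $f$ may repeat values infinitely often, and the assertion that the constructed $[f]$ ``remains non-standard'' is exactly what still has to be proved; under the natural reading of ``the previous $h(i)$-many values'' as the most recent ones $f(i-h(i)),\dots,f(i-1)$, it does not follow, since a value can reappear as soon as it leaves the window, on a set of indices over which you have no control and which may well lie in $\calU$. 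The construction does work if you avoid an initial segment instead: set $h(i)=\min(|A_i|-1,\lfloor\log_2 i\rfloor)$ (the $-1$ also guarantees $A_i\setminus\{f(1),\dots,f(h(i))\}\neq\emptyset$) and choose $f(i)$ in that difference. Then for a standard $x$ with $x=f(k)$ one has $\{i:f(i)=x\}\subseteq\{k\}\cup\{i:h(i)<k\}$, because $h(i)\ge k$ forces $i>h(i)\ge k$ and forces $f(k)$ to be among the avoided values at step $i$; since $\{i:h(i)\ge k\}\supseteq\{i:|A_i|\ge k+1\}\cap\{i:i\ge 2^{k}\}\in\calU$, the set $\{i:f(i)=x\}$ is not in $\calU$, so $[f]\neq x$ for every $x\in X$, while $[f]\in A$, the desired contradiction. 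With this verification written out your proof is correct; note also that for countable $X$ (in particular for Cayley graphs of groups) your first argument already suffices and stays entirely within transfer and saturation, without unwinding the ultrapower.
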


Let us introduce some (quite intuitive) notation, which we are going to use from now on.

\begin{defn}
Consider $\xi,\eta\in\st\R$, with $\eta\neq 0$. We will write:
\begin{itemize}
\item
$\xi\in o(\eta)$ (or $\xi\ll\eta$ if $\xi,\eta$ are nonnegative) if $\xi/\eta$ is infinitesimal,
\item
$\xi\in O(\eta)$ if $\xi/\eta$ is finite,
\item
$\xi\gg\eta$ if $\xi,\eta$ are nonnegative and $\xi/\eta$ is infinite,
\end{itemize}

For example, $o(1)$ is the set of infinitesimals, and $O(1)=\{\xi\in\st\R:|\xi|<r \textrm{\ for\ some }r\in\R^+\}$.

\end{defn}

The map we given by the following lemma plays a fundamental role in nonstandard analysis, and will be used in the definition of asymptotic cone:

\begin{prop}
There exists a map $st:O(1)\to\R$ such that, for each $\xi\in\st\R$, $\xi-st(\xi)$ is infinitesimal.

\end{prop}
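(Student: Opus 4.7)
The plan is to exploit Dedekind completeness of $\R$: given $\xi\in O(1)$, its "standard part" should be the unique real number that separates the reals below $\xi$ from the reals above. Concretely, for $\xi\in O(1)$ I define
$$st(\xi)=\sup\{r\in\R : r\leq \xi\},$$
where $\leq$ is interpreted in the ordered field $\st\R$ via the embedding $\R\hookrightarrow\st\R$. The hypothesis $\xi\in O(1)$ means $|\xi|<M$ for some $M\in\R^+$, so the set on the right is non-empty (it contains $-M$) and bounded above (by $M$). Hence the supremum exists in $\R$ and the map $st:O(1)\to\R$ is well defined.

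Next I would verify that $\xi-st(\xi)$ is infinitesimal. Set $s=st(\xi)$ and suppose for contradiction that $\xi-s\notin o(1)$; then there exists $n\in\N^+$ with $|\xi-s|\geq 1/n$. If $\xi\geq s+1/n$, then $s+1/n$ is a real number lying in $\{r\in\R:r\leq\xi\}$, contradicting $s=\sup$. If instead $\xi\leq s-1/n$, then every real $r>s-1/n$ satisfies $r>\xi$, so $s-1/n$ is already an upper bound of $\{r\in\R:r\leq\xi\}$ smaller than $s$, again a contradiction.

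There is really no obstacle here beyond being careful that the supremum is taken in $\R$ (where completeness holds) and not in $\st\R$ (where it would not, as $\st\R$ has infinitesimal gaps). The construction is the standard one from nonstandard analysis, and saturation is not actually needed for this statement; the whole content lies in Dedekind completeness of $\R$ together with the fact that the order on $\st\R$ extends the order on $\R$.
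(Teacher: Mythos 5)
Your proof is correct: the paper states this proposition without giving a proof (it is a standard fact of nonstandard analysis, cf.\ the reference \cite{Go}), and your argument via $st(\xi)=\sup\{r\in\R : r\leq \xi\}$, using Dedekind completeness of $\R$ and the fact that the order on $\st\R$ extends that of $\R$, is exactly the canonical one; the case analysis ruling out $|\xi-st(\xi)|\geq 1/n$ is complete and needs no saturation, as you note.
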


We will call $st(\xi)$ the standard part of $\xi$. Notice that $st(\xi)=0\iff \xi$ is infinitesimal.
\par

Many common definitions have interesting nonstandard counterparts. Here is an example which will be used later.

\begin{prop}\label{compactnonstchar:prop}
The metric space $X$ is compact if and only if for each $\xi\in \st X$ there exists $x\in X$ such that $d(x,\xi)\in o(1)$.
\end{prop}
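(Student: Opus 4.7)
The plan is to prove each implication using the transfer principle together with (in the reverse direction) the equivalence between compactness and sequential compactness for metric spaces.

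For the forward direction, assume $X$ is compact and fix $\xi\in\st X$. I argue by contradiction: suppose no $x\in X$ satisfies $d(x,\xi)\in o(1)$. Then for each $x\in X$ the (nonnegative) quantity $d(x,\xi)$ is not infinitesimal, so there is a positive real $r_x$ with $d(x,\xi)>r_x$, i.e.\ $\xi\notin\st B(x,r_x)$. The open balls $\{B(x,r_x)\}_{x\in X}$ cover $X$, and by compactness some finite subcollection $B(x_1,r_{x_1}),\dots,B(x_n,r_{x_n})$ already covers $X$. Applying the transfer principle to the bounded formula expressing $X\subseteq B(x_1,r_{x_1})\cup\cdots\cup B(x_n,r_{x_n})$ yields $\st X\subseteq\st B(x_1,r_{x_1})\cup\cdots\cup\st B(x_n,r_{x_n})$, forcing $\xi$ into one of these balls and contradicting the choice of the $r_{x_i}$.

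For the reverse direction, assume every $\xi\in\st X$ is infinitesimally close to some point of $X$, and fix an arbitrary sequence $\{x_n\}_{n\in\N}$ in $X$; I show it has a subsequence converging in $X$. Regard the sequence as a map $f:\N\to X$ and let $\st f:\st\N\to\st X$ be its nonstandard extension. Pick an infinite $\nu\in\st\N$ (which exists, e.g.\ by overspill applied to $\st\N$). By hypothesis there is $x\in X$ with $d(\st f(\nu),x)$ infinitesimal. Now for every $k,N\in\N$ the internal set
\[
\bigl\{n\in\st\N:n\geq N\text{ and }d(\st f(n),x)<1/k\bigr\}
\]
contains $\nu$, hence is nonempty. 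By the transfer principle the standard version $\{n\in\N:n\geq N\text{ and }d(x_n,x)<1/k\}$ is also nonempty, and inductively this yields indices $n_1<n_2<\cdots$ with $d(x_{n_k},x)<1/k$, so $x_{n_k}\to x$. Thus $X$ is sequentially compact, hence compact.

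I do not expect a serious obstacle: both directions follow standard patterns in nonstandard analysis. The only point demanding care is the reverse direction, where I must express the relevant statements as bounded formulas (quantifying over $\N$ resp.\ $\st\N$) so that transfer applies cleanly, and invoke the existence of an infinite $\nu\in\st\N$ from an earlier result in the section.
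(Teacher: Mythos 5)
Your proof is correct. The paper states this proposition without proof (it is Robinson's classical nonstandard characterization of compactness), so there is no proof in the text to compare against; your argument --- transferring a finite subcover to $\st X$ for the forward direction, and for the converse using the hypothesis at $\st f(\nu)$ for an infinite $\nu\in\st\N$ and transferring the bounded existential statement back to $\N$ to extract a convergent subsequence --- is the standard one and uses only tools the paper provides (the transfer principle, Lemma~\ref{cof:lem}/overspill for the existence of infinite elements of $\st\N$, and sequential compactness of metric spaces). One cosmetic remark: in the converse you do not actually need the displayed set to be internal; all you use is that the bounded formula $\exists n\in\N\,(n\geq N\ \wedge\ d(f(n),x)<1/k)$ has a true nonstandard interpretation, witnessed by $\nu$, so transfer already yields the standard statement.
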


\section{Asymptotic cones}

Let $(X,d)$ be a metric space. The asymptotic cones of $X$ are ``ways to look at $X$ from infinitely far away''. Let us make this idea precise.
\par
First, let us fix some (standard) notation. If $Z$ is a set and $\sim$ is an equivalence relation on $Z$, the equivalence class of $z\in Z$ will be denoted by $[z]$ and the quotient set of $Z$ will be denoted by $Z/\sim$.

\begin{defn}
Consider $\nu\in\st\R$, $\nu\gg 1$. Define on $\st X$ the equivalence relation $x\sim y\iff d(x,y)\in o(\nu)$.
The asymptotic cone $C(X,p,\nu)$ of $X$ with basepoint $p\in\st X$ and scaling factor $\nu$ is defined as
$$\{[x]\in\st X/\sim: d(x,p)\in O(\nu)\}.$$
The distance on $C(X,p,\nu)$ is defined as $d([x],[y])=st\left(\st d(x,y)/\nu\right)$.
\end{defn}

This definition of asymptotic cone is basically due to van den Dries and Wilkie, see~\cite{vDW}.

Before proceeding, a few definitions. If $q\in\st X$ and $d(p,q)\in O(\nu)$, so that $[q]\in C(X,p,\nu)$, then $[q]$ will be called the projection of $q$ on $C(X,p,\nu)$. Similarly, if $A\subseteq \{x\in\st X: d(x,p)\in O(\nu)\}$, the projection of $A$ on $C(X,p,\nu)$ is $\{[a]|a\in A\}$.
\par
The following properties of asymptotic cones are well-known:

\begin{lemma}\label{propcone:lem}
\begin{enumerate}
\item
Any asymptotic cone is a complete metric space.
\item
Any asymptotic cone of a geodesic metric space is a geodesic metric space.
\end{enumerate}
\end{lemma}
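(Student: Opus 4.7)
The plan is to establish (1) by saturation and (2) by transfer applied to the geodesic property of $X$.

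For (1), the plan is to take a Cauchy sequence $\{[x_n]\}_{n\in\N}$ in $C(X,p,\nu)$ and, after passing to a subsequence, assume $d([x_{n+1}],[x_n])<2^{-n-1}$. First I would choose representatives $x_n\in\st X$ so that the telescoped estimate $d(x_n,x_m)\leq \nu/2^{n-1}$ holds for all $m\geq n$; this is possible because the cone metric is defined as $st(\st d(\cdot,\cdot)/\nu)$, so representatives can be slightly adjusted in each equivalence class to absorb the infinitesimal discrepancy at each step. Then the internal sets
$$A_n:=\bigl\{\xi\in\st X:\st d(\xi,x_n)\leq \nu/2^{n-2}\bigr\}$$
have the finite intersection property, since $x_m\in A_n$ for every $m\geq n$. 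By Theorem~\ref{sat:thm} there is $\xi\in\bigcap_n A_n$; the class $[\xi]$ lies in $C(X,p,\nu)$ because $d(\xi,p)\leq d(\xi,x_1)+d(x_1,p)\in O(\nu)$, and $d([\xi],[x_n])\leq st(\nu/2^{n-2}\cdot\nu^{-1})\to 0$, so $[\xi]=\lim[x_n]$.

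For (2), fix $[x],[y]\in C(X,p,\nu)$ and let $L:=d([x],[y])$; if $L=0$ the constant map is a (degenerate) geodesic. Otherwise, apply the transfer principle to the bounded formula expressing ``for every $a,b\in X$ there exists $\gamma\in X^{[0,1]}$ with $\gamma(0)=a$, $\gamma(1)=b$ and $d(\gamma(s),\gamma(t))=|s-t|\, d(a,b)$ for all $s,t\in[0,1]$'', which holds in $X$ because $X$ is geodesic. This yields an internal map $\gamma:\st[0,1]\to\st X$ with $\gamma(0)=x$, $\gamma(1)=y$, and $\st d(\gamma(s),\gamma(t))=|s-t|\st d(x,y)$ for all $s,t\in\st[0,1]$. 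Define
$$\Gamma:[0,L]\to C(X,p,\nu),\qquad \Gamma(t):=[\gamma(t/L)].$$
Each $\Gamma(t)$ lies in $C(X,p,\nu)$ because $d(\gamma(t/L),p)\leq d(\gamma(t/L),x)+d(x,p)=(t/L)d(x,y)+d(x,p)\in O(\nu)$, and a direct standard-part computation
$$d(\Gamma(s),\Gamma(t))=st\!\left(\frac{|s-t|}{L}\cdot\frac{d(x,y)}{\nu}\right)=\frac{|s-t|}{L}\cdot L=|s-t|$$
shows that $\Gamma$ is an isometric embedding of $[0,L]$ joining $[x]$ to $[y]$.

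The main obstacle, in my view, is the bookkeeping in (1): one must simultaneously control Cauchy closeness in the cone metric and actual closeness of chosen representatives in $\st X$ so that the internal family $\{A_n\}$ genuinely has the FIP. In (2) the only real subtlety is packaging the existence of a geodesic as a bounded formula, which is why I would parametrize geodesics on the fixed interval $[0,1]$ before invoking transfer, rather than letting the domain vary with the endpoints.
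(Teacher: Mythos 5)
Your proof is correct: the saturation argument via the internal balls $A_n$ does give a limit point for the telescoped Cauchy subsequence, and the transfer of the constant-speed parametrization on $[0,1]$ yields an internal path whose projection is a geodesic in the cone; these are exactly the tools (Theorems~\ref{sat:thm} and~\ref{transf:thm}) the paper has set up. Note the paper itself states Lemma~\ref{propcone:lem} as well-known and gives no proof, so there is nothing to compare against; the only remark is that your ``adjustment of representatives'' in (1) is unnecessary, since $st\bigl(\st d(x_{n+1},x_n)/\nu\bigr)<2^{-n-1}$ already forces $\st d(x_{n+1},x_n)<2^{-n-1}\nu$ for any choice of representatives.
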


\subsection{Asymptotic cones of groups}

\begin{conv}
All groups from now on are implied to be finitely generated. 
\end{conv}

Consider a group $G$ and a finite generating set $S=S^{-1}$ for $G$ (even if not explicitly stated, we will always assume that generating sets satisfy $S=S^{-1}$). We are going to define a metric graph $\mathcal{CG}_S(G)$ associated to $(G,S)$, which is called the Cayley graph of $G$ with respect to $S$.
The vertices of $\mathcal{CG}_S(G)$ are the elements of $G$, and there is an edge of length 1 between $g$ and $h$ if and only if there exists $s\in S$ such that $gs=h$. We will consider $\mathcal{CG}_S(G)$ endowed with the path metric induced by this data.

It is easy to prove that $\mathcal{CG}_S(G)$ is a geodesic metric space.
\par
We are now ready to define the asymptotic cone of a group.

\begin{defn}
Let $G$ be a finitely generated group and $S$ a finite generating set for $G$. The asymptotic cone $C_S(G,g,\nu)$ of $G$ with basepoint $g\in\st G$ and scaling factor $\nu\gg 1$ is $C(\mathcal{CG}_S(G),g,\nu)$.

\end{defn}

Let us state some useful and well-known properties of asymptotic cones of groups.

\begin{lemma}\label{propconegroups:lem}
For any group $G$, finite generating sets $S,S'$, $g,g'\in\st\G$, $\nu\gg 1$:

\begin{enumerate}
\item
$C_S(G,g,\nu)$ is complete, geodesic and homogeneous,
\item
$C_S(G,g,\nu)$ is isometric to $C_S(G,g',\nu)$,
\item
$C_S(G,g,\nu)$ is $k-$bilipschitz homeomorphic to $C_{S'}(G,g,\nu)$.
\end{enumerate}

\end{lemma}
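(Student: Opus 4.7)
The plan is to reduce everything to two standard facts about Cayley graphs, both of which transfer to the nonstandard setting via Theorem~\ref{transf:thm}: (a) left multiplication $L_h(x)=hx$ is an isometry of $\mathcal{CG}_S(G)$ for every $h\in G$, and (b) the identity map $\mathcal{CG}_S(G)\to\mathcal{CG}_{S'}(G)$ is $k$-bilipschitz for some constant $k=k(S,S')$, because each element of $S$ is a bounded-length word in $S'$ and vice versa. By transfer, for every $h\in\st G$ the map $L_h:\st G\to\st G$ preserves the (nonstandard) word distance $d_S$, and the identity $(\st G,d_S)\to(\st G,d_{S'})$ is still $k$-bilipschitz.

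For point $(1)$, completeness and geodesicity are immediate from Lemma~\ref{propcone:lem} once we recall that $\mathcal{CG}_S(G)$ is geodesic. For homogeneity, given $[x],[y]\in C_S(G,g,\nu)$, consider $L_{yx^{-1}}$. Since $d(x,y)\le d(x,g)+d(g,y)\in O(\nu)$, for any $[z]\in C_S(G,g,\nu)$ we have
$$d(yx^{-1}z,g)=d(yx^{-1}z,yx^{-1}\cdot xy^{-1}g)=d(z,xy^{-1}g)\le d(z,g)+d(xy^{-1},e),$$
which lies in $O(\nu)$, so $L_{yx^{-1}}$ preserves the basepoint condition; it descends to the quotient because it preserves the equivalence $d(\cdot,\cdot)\in o(\nu)$, and its inverse $L_{xy^{-1}}$ has the same properties. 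Thus $L_{yx^{-1}}$ induces an isometry of $C_S(G,g,\nu)$ sending $[x]$ to $[y]$.

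For point $(2)$, apply the left-invariance trick directly: $L_{g'g^{-1}}:\st G\to\st G$ is an isometry and $d(g'g^{-1}x,g')=d(x,g)$, so it maps $\{x\in\st G:d(x,g)\in O(\nu)\}$ bijectively onto $\{x\in\st G:d(x,g')\in O(\nu)\}$ and preserves the relation $\sim$. Passing to the quotients yields an isometry $C_S(G,g,\nu)\to C_S(G,g',\nu)$.

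For point $(3)$, by fact (b) the two sets $\{x\in\st G:d_S(x,g)\in O(\nu)\}$ and $\{x\in\st G:d_{S'}(x,g)\in O(\nu)\}$ coincide, and the $\sim$-equivalence relations (with respect to $d_S$ and $d_{S'}$) coincide as well. Hence the identity of $\st G$ descends to a well-defined bijection $\iota:C_S(G,g,\nu)\to C_{S'}(G,g,\nu)$, and from $d_{S'}/k\le d_S\le k\, d_{S'}$ we obtain, after dividing by $\nu$ and taking standard parts, the same inequalities between the induced distances, so $\iota$ is $k$-bilipschitz. The only mild subtlety throughout is verifying that the transferred maps really live on the quotients (i.e.\ respect the $O(\nu)$ and $o(\nu)$ conditions), which is where left-invariance of the word metric is used decisively; beyond that, no step presents a serious obstacle.
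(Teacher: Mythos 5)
The paper states this lemma without proof, as a list of well-known facts, so there is no proof of record to compare against; judged on its own, your strategy (transfer left-invariance of the word metric and the $k$-bilipschitz equivalence of the two word metrics, then check that the maps respect the $O(\nu)$ and $o(\nu)$ conditions) is the standard one, and your arguments for points $(2)$ and $(3)$ are correct. The flaw is in the homogeneity step of $(1)$: the displayed inequality $d(z,xy^{-1}g)\le d(z,g)+d(xy^{-1},e)$ tacitly uses $d(g,xy^{-1}g)=d(e,xy^{-1})$, i.e.\ \emph{right}-invariance of the word metric, whereas the word metric is only left-invariant; conjugation by the (possibly infinitely long) element $g\in\st G$ can distort lengths arbitrarily. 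Concretely, in $F_2=\langle a,b\rangle$ take $g=a^{\nu}$, $x=g$, $y=bg$, $z=g$: then $xy^{-1}=b^{-1}$, $d(z,xy^{-1}g)=|a^{-\nu}b^{-1}a^{\nu}|=2\nu+1$, while $d(z,g)+d(xy^{-1},e)=1$. Note also that $d(xy^{-1},e)$ is the wrong quantity to control in any case: it is not bounded by $d(x,y)$ and need not lie in $O(\nu)$.

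The gap is local and the intended conclusion is true. Compare $yx^{-1}z$ with $y=yx^{-1}x$ instead of with $g$: by left-invariance, $d(yx^{-1}z,g)\le d(yx^{-1}z,yx^{-1}x)+d(y,g)=d(z,x)+d(y,g)\le d(z,g)+d(x,g)+d(y,g)\in O(\nu)$, so $L_{yx^{-1}}$ does preserve the basepoint condition (and, being an isometry of $\st G$, it preserves $\sim$). Equivalently, you can deduce $(1)$ from your correct proof of $(2)$: since $d(x,g)\in O(\nu)$, the cones $C_S(G,g,\nu)$ and $C_S(G,x,\nu)$ coincide as metric spaces, and the isometry $L_{yx^{-1}}\colon C_S(G,x,\nu)\to C_S(G,y,\nu)$ is then a self-isometry of $C_S(G,g,\nu)$ sending $[x]$ to $[y]$. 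A last cosmetic point: the cone is built from the Cayley graph rather than from $G$ alone, but since every point of $\st{\mathcal{CG}_S(G)}$ lies within distance $1\ll\nu$ of a point of $\st G$, working with group elements as you do is harmless.
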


Notice that the third property implies that the topological properties of the asymptotic cones of a group do not depend on the choice of a finite generating set. This is why asymptotic cones are useful to study groups.

\section{Use of nonstandard methods for asymptotic cones}
The main aim of this section is to show how nonstandard methods can be used to prove results about asymptotic cones.
\par
We will see that the following lemma gives several obstructions for a space to be realized as an asymptotic cone.
\begin{lemma}\label{finuncount:lem}
An internal set is finite or has cardinality at least $2^{\aleph_0}$.
\end{lemma}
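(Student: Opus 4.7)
The plan is to prove the contrapositive: assume $A$ is an internal set that is not finite, and produce $2^{\aleph_0}$ distinct elements. I would break this into two stages.

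First, I would use overspill to reduce to showing that an initial segment $\{1,\ldots,N\} \subseteq \st\N$ with $N$ infinite has external cardinality at least $2^{\aleph_0}$. The reduction goes through the set
$$S := \{n \in \st\N : \text{there exists an internal injection } \{1,\ldots,n\} \hookrightarrow A\},$$
which is internal by transfer. Since $A$ is not finite, for each standard $n \in \N$ I can pick $n$ distinct elements of $A$, and the resulting map has finite domain and hence is internal; so $S$ contains every standard $n$. By overspill, $S$ then contains some infinite $N \in \st\N$, producing an internal injection $\{1,\ldots,N\} \hookrightarrow A$. Thus it suffices to exhibit $2^{\aleph_0}$ points in $\{1,\ldots,N\}$.

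Second, I would carry out a binary subdivision combined with saturation. Recursively define internal subintervals $I_s \subseteq \{1,\ldots,N\}$ for $s \in 2^{<\N}$ by $I_\emptyset = \{1,\ldots,N\}$ and, at each stage, bisecting $I_s$ into two disjoint halves $I_{s\cdot 0}$ and $I_{s\cdot 1}$. The internal length of $I_s$ is at least $\lfloor N/2^{|s|} \rfloor$; since $N$ is infinite and $|s|$ is a standard natural, this remains infinite at every depth, so the recursion never stalls and every $I_s$ is non-empty. For each branch $x \in 2^\N$, the nested chain $\{I_{x|_n}\}_{n \in \N}$ is a decreasing sequence of non-empty internal sets, hence has the FIP, so by Theorem~\ref{sat:thm} I can pick $a_x \in \bigcap_n I_{x|_n}$. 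If $x \neq y$ then they first differ at some finite level $n$, where $I_{x|_{n+1}}$ and $I_{y|_{n+1}}$ are disjoint by construction, forcing $a_x \neq a_y$. Composing with the internal injection from the first stage yields $2^{\aleph_0}$ distinct elements of $A$.

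The main obstacle is the first step: translating the external hypothesis "$A$ is not finite" into an internal statement amenable to overspill. One must justify that $S$ is internal (definable by a bounded formula with internal parameters, so internal by transfer) and that each standardly-chosen finite tuple of elements of $A$ gives a bona fide internal injection, not merely an external map. Once an infinite $N \in S$ is in hand, the binary tree argument via saturation is a standard and essentially mechanical construction.
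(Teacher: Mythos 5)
Your proof is correct, but it takes a different route from the paper in both of its steps. For the reduction, the paper does not use overspill: it transfers the standard dichotomy ``every set is finite or admits an injection from $\N$'' to conclude that every internal set admits an internal bijection from some initial segment $\{0,\dots,\nu\}$, $\nu\in\st\N$, or an internal injection from $\st\N$ (the latter case being immediate since $|\st\N|=2^{\aleph_0}$). This sidesteps the point you rightly identify as your main obstacle: the internality of your set $S$ rests on the internal definition principle, which is derivable from transfer but is not among the tools the paper states, so you would have to justify it, e.g.\ by applying $\st{}$ to the standard map $\calP(X)\to\calP(\N)$ sending $B$ to $\{n:\exists\ \text{injection } \{1,\dots,n\}\to B\}$ and evaluating at $A$ (and similarly you need that a function with standard finite domain and internal values is internal). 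For the second step, the paper shows that a hyperfinite interval $\{0,\dots,\nu\}$ with $\nu$ infinite has at least continuum many elements in one line, by observing that $\alpha\mapsto st(\alpha/\nu)$ maps it onto $[0,1]$; your nested binary subdivision plus countable saturation proves the same fact, at the cost of being longer and of being the only place saturation enters, whereas the paper's proof of this lemma uses no saturation at all. What your version buys is robustness: the tree argument makes no reference to $\R$ or standard parts and would work verbatim in any $\aleph_0$-saturated structure, while the paper's argument is shorter and stays strictly within the transfer principle plus the standard part map already introduced.
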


\begin{proof}
Any set is finite or admits an injective function from $\N$. By the transfer of this property, we have that every internal set admits a bijective (internal) function from $\{0,\dots,\nu\}$ for some $\nu\in\, ^*\N$ or an injective (internal) function from $^*\N$.
\par
So it is enough to prove that the set $\{0,\dots,\nu\}$ is uncountable for every infinite $\nu$. The fact that the map
$$\alpha\in \{0,\dots,\nu\}\mapsto st(\alpha/\nu)\in [0,1]$$
is surjective implies the claim.

\end{proof}

Let $X$ be a metric space. For $p\in X$ and $r_1,r_2,l\geq 0$ denote by $F_X(p,r_1,r_2,l)$ the supremum of the cardinalities of sets $M$ satisfying
\begin{enumerate}
\item
$\forall x\in M, \ r_1\leq d(x,p)\leq r_2,$
\item
$\forall x,y\in M,\ x\neq y\Rightarrow d(x,y)\geq l$.
\end{enumerate}
A set $M$ satisfying the above properties will be called, for $\alpha\leq |M|$, a test for $F_X(p,r_1,r_2,l)\geq \alpha$.

\begin{figure}[h]
\centering
\includegraphics[height=5.5cm]{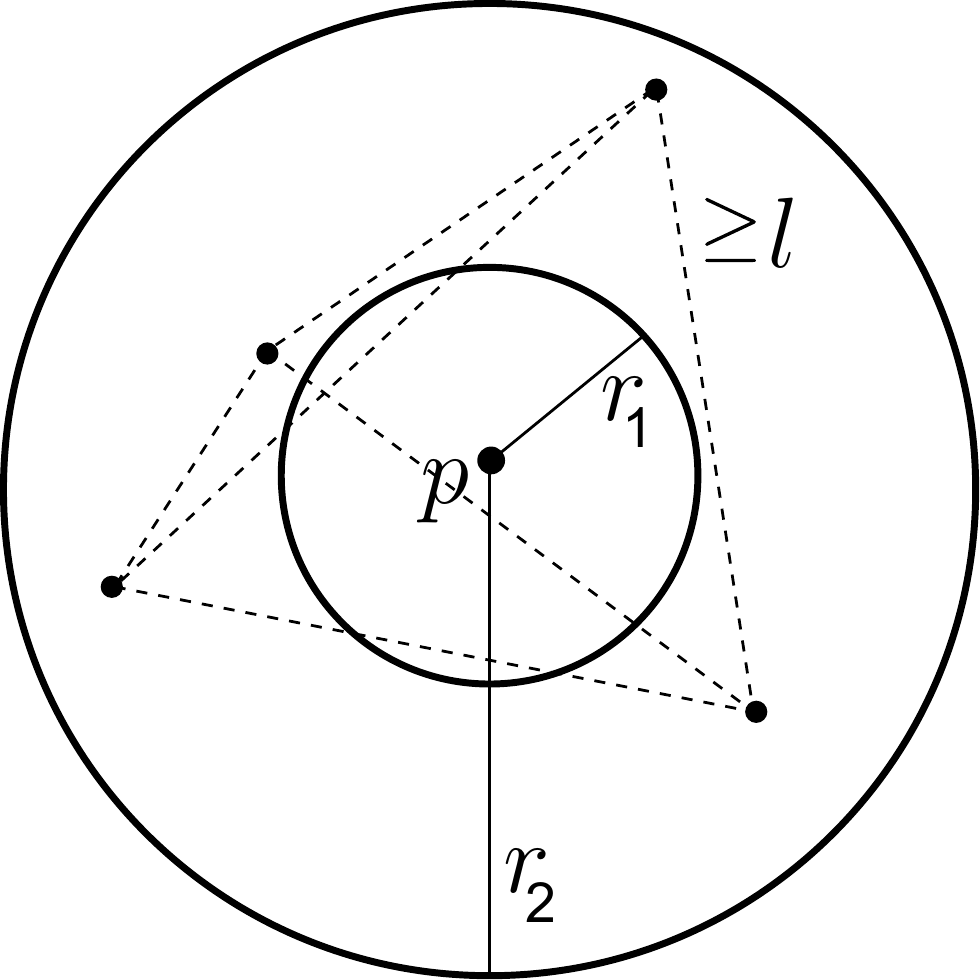}
\end{figure}

\begin{rem}
\begin{itemize}
\item
If $F_X(p,r_1,r_2,l)$ is finite, then it is a maximum,
\item
for each $\alpha<F_X(p,r_1,r_2,l)$ we can find a test for $F_X(p,r_1,r_2,l)\geq\alpha$.
\end{itemize}

\end{rem}

For convenience, if $\rho\in\st\N$ is infinite, we set $st(\rho)=2^{\aleph_0}$. We also set $st(\alpha)=2^{\aleph_0}$ for each $\st$cardinality $\alpha\geq\st\aleph_0$.
\par 
We will often use the following easy properties:

\begin{rem}\label{properties:rem}
\begin{enumerate}
\item
$F_X(p,r_1,r_2,l)\leq F_X(p',r'_1,r'_2,l')$ if and only if for each test $M$ for $F_X(p,r_1,r_2,l)\geq\alpha$ there is a test $M'$ for $F_X(p',r'_1,r'_2,l')\geq\alpha$,
\item
if $r_1\geq r'_1$, $r_2\leq r'_2$ and $l\geq l'$ then $F_X(p,r_1,r_2,l)\leq F_X(p,r'_1,r'_2,l')$,
\item
if $X$ is the asymptotic cone of $Y$ with basepoint $p$ and scaling factor $\nu$, then $st\left(F_{\st Y}(p,\rho_1\nu,\rho_2\nu,\lambda\nu)\right)\leq F_X([p],st(\rho_1),st(\rho_2),st(\lambda))$, where $\rho_1,\rho_2$ and $\lambda$ are finite and $st(\lambda)>0$.

\end{enumerate}
\end{rem}

\begin{proof}
$(1)$ is straightforward from the definitions.
\par
$(2)$ If $M$ is a test for $F_X(p,r_1,r_2,l)\geq \alpha$, then it is also a test for $F_X(p,r'_1,r'_2,l')\geq\alpha$.
\par
$(3)$ By our convention on the standard part and the fact that $|\st\N|=2^{\aleph_0}$, $st(\nu)$ is the cardinality of $\nu=\{0,\dots,\nu-1\}$. In fact, if $\nu$ is finite, $st(\nu)=\nu$, otherwise $st(\nu)=2^{\aleph_0}\leq |\nu| \leq |\st \N|=2^{\aleph_0}$ (we used Lemma~\ref{finuncount:lem}). If $M$ is a test for $F_{\st Y}(p,\rho_1\nu,\rho_2\nu,\lambda\nu)\geq\alpha$, its projection on $X$ is a test for $F_X([p],st(\rho_1),st(\rho_2),st(\lambda))\geq st(\alpha)$ (projections of distinct elements of $M$ are distinct because their distance is at least $st(\lambda)>0$).

\end{proof}

\begin{prop}\label{uncount:prop}
Let $X$ be a metric space. If $X$ is an asymptotic cone then for each $p,r_1,r_2,l$, with $l>0$, if $F_X(p,r_1,r_2,l)$ is infinite then it is at least $2^{\aleph_0}$.

\end{prop}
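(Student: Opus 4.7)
My plan is to exploit the cardinality dichotomy of Lemma~\ref{finuncount:lem}: I will produce an \emph{internal} ``almost-test'' in $\st Y$ which is internally infinite, hence externally of size at least $2^{\aleph_0}$, and then project it to $X$ to obtain a test of that size.

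Write $X=C(Y,q,\nu)$ and fix $\hat p\in\st Y$ with $[\hat p]=p$. For each $n\in\N^+$, since $F_X(p,r_1,r_2,l)\geq n$ there is a test $\{[y_1^{(n)}],\dots,[y_n^{(n)}]\}$ in $X$; choosing representatives $y_j^{(n)}\in\st Y$ and absorbing the infinitesimal error introduced by $st$ into the slack $1/n$, these satisfy $(r_1-1/n)\nu\leq d(y_j^{(n)},\hat p)\leq(r_2+1/n)\nu$ for each $j$ and $d(y_j^{(n)},y_k^{(n)})\geq(l-1/n)\nu$ for $j\neq k$. Let $B_n$ be the set of $M\in\st\calP(Y)$ with $|M|\geq n$, with every $y\in M$ satisfying $(r_1-1/n)\nu\leq d(y,\hat p)\leq(r_2+1/n)\nu$, and with $d(y,y')\geq(l-1/n)\nu$ for all distinct $y,y'\in M$. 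Each $B_n$ is internal, being cut out by internal conditions in the internal parameters $\hat p,\nu,r_1,r_2,l,n$; equivalently, $B_n=\st h_n(\hat p,\nu)$ for the obvious standard function $h_n$. The finite set $\{y_1^{(n)},\dots,y_n^{(n)}\}$, which is internal, witnesses $B_n\neq\emptyset$.

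Tightening $n$ to $n+1$ shrinks every constraint, so $B_{n+1}\subseteq B_n$ and the family $\{B_n\}_{n\in\N}$ has the finite intersection property. Saturation (Theorem~\ref{sat:thm}) yields some $M\in\bigcap_n B_n$. Internally $|M|\geq n$ for every standard $n$, so $M$ is not finite, and Lemma~\ref{finuncount:lem} gives $|M|\geq 2^{\aleph_0}$. Letting $n\to\infty$ in the defining inequalities of $B_n$ shows, for every $y\in M$, that $d(y,\hat p)\in O(\nu)$ with $st(d(y,\hat p)/\nu)\in[r_1,r_2]$, and, for distinct $y,y'\in M$, that $d(y,y')\geq l\nu$, which is not in $o(\nu)$ since $l>0$. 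Consequently, the projection $M\to X$ is well-defined and injective, and its image is a test witnessing $F_X(p,r_1,r_2,l)\geq 2^{\aleph_0}$.

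The step to check most carefully is the internality of $B_n$: one must interpret ``$|M|\geq n$'' as the internal cardinality and verify that the defining conditions form a bounded formula with internal parameters, so that the internal definition principle (or transfer of a suitable standard function) produces an element of $\st\calP(\calP(Y))$. The rest is bookkeeping: the slack $1/n$ is built in precisely so that the lifts from the second paragraph sit inside $B_n$, and not merely in some external near-version of it.
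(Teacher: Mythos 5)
Your proof is correct, but it packages the argument differently from the paper, so a comparison is worth making. The paper also lifts, for each $n$, a test of size $n$ to $\st Y$ with infinitesimal slack, but it applies saturation only through Lemma~\ref{cof:lem}, to dominate the countably many infinitesimal errors by a single $\xi$; it then observes that the transferred counting function satisfies $F_{\st Y}(\pi,(r_1-\xi)\nu,(r_2+\xi)\nu,(l-\xi)\nu)\geq n$ for every finite $n$, hence is at least some infinite value, and finishes by Remark~\ref{properties:rem}~(3), whose proof contains precisely your projection step and the appeal to Lemma~\ref{finuncount:lem}. You instead apply saturation directly to the nested internal family $\{B_n\}$ of almost-tests, extract a single internal $M$ which is not finite, and apply Lemma~\ref{finuncount:lem} to $M$ itself; this bypasses $F_{\st Y}$ and Remark~\ref{properties:rem}~(3), but it requires the internal definition principle (or, as you note, transfer of the set-valued standard map $h_n$, so that $B_n\in\st\calP(\calP(Y))$) --- a step the paper's route avoids, since it only ever transfers the function $F$ itself and never needs internality of a definable collection. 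Both proofs rest on the same two tools, countable saturation and the finite-or-$2^{\aleph_0}$ dichotomy, and both use $l>0$ to make the projection injective, so yours buys a self-contained construction of an explicit internal witness at the cost of one extra (but standard) internality verification. One small inaccuracy: from $M\in\bigcap_n B_n$ you only get $d(y,y')\geq(l-1/n)\nu$ for every standard $n$, hence $st\left(d(y,y')/\nu\right)\geq l$, not the literal inequality $d(y,y')\geq l\nu$; the weaker statement is exactly what is needed for the image to be a test in $X$, so the conclusion is unaffected.
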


\begin{proof}
Assume that $X$ is an asymptotic cone of $Y$, with scaling factor $\nu$, and fix $p,r_1,r_2,l$ as above and such that $F_X(p,r_1,r_2,l)\geq \aleph_0$. Fix a representative $\pi\in\st Y$ of $p$. For each $n$, one can find $x_{n,1},\dots,x_{n,n}\in \st Y$ such that
\begin{itemize}
\item
each $x_{n,i}$ is at a distance $(r_{n,i}+\xi_{n,i})\nu$ from $\pi$, for some $r_{n,i}\in[r_1,r_2]$ and some infinitesimal $\xi_{n,i}$,
\item
for each $n$ and $i\neq j$, $d(x_{n,i},x_{n,j})>(l-\xi_{n,i,j})\nu$ for some positive infinitesimal $\xi_{n,i,j}$.
\end{itemize}
\par
We can bound all the $|\xi_{n,i}|$'s and $\xi_{n,i,j}$ by some positive infinitesimal $\xi$, by Lemma~\ref{cof:lem}.
So we have that $F_{\st Y}(\pi,(r_1-\xi)\nu,(r_2+\xi)\nu,(l-\xi)\nu)$ is greater than any finite $n$, hence it is greater than some infinite $\rho\in\st\N$. The conclusion follows from point $(3)$ of Remark~\ref{properties:rem}.

\end{proof}

We will now study the consequences of this proposition for real trees appearing as asymptotic cones. These are interesting objects in view of the fact that a geodesic metric space is hyperbolic if and only if each of its asymptotic cones is a real tree (see for example~\cite{Dr1} or~\cite{FS}). Groups such that at least one of their asymptotic cones is a real tree are studied in~\cite{OOS}.

\begin{cor}\label{nocount:cor}
If $X$ is a real tree such that every geodesic can be extended (e.g.: a homogeneous real tree, see~\cite{DP}) and the valency at a point $p$ is infinite, then this valency is at least $2^{\aleph_0}$.
\end{cor}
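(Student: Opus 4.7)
The plan is to reduce the statement to Proposition~\ref{uncount:prop} applied to the quantity $F_X(p,1,1,2)$, since $X$ is assumed to be realized as an asymptotic cone.

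First, I would produce, for each connected component $C$ of $X\setminus\{p\}$, a distinguished point on the unit sphere around $p$. Pick any $y\in C$ and a geodesic $\gamma$ from $p$ to $y$. If $d(p,y)\geq 1$, take $x_C=\gamma(1)$; if $d(p,y)<1$, use the geodesic extension hypothesis to prolong $\gamma$ beyond $y$ to length $1$, and set $x_C=\gamma(1)$. In both cases $d(p,x_C)=1$ and the whole arc from $p$ (excluding $p$ itself) lies in $C$, so $x_C\in C$.

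Next I would show that for distinct components $C\neq C'$ one has $d(x_C,x_{C'})=2$. Since $X$ is a real tree, the geodesic triangle with vertices $p,x_C,x_{C'}$ is a tripod, so $[x_C,x_{C'}]\subseteq [p,x_C]\cup[p,x_{C'}]$. If $d(x_C,x_{C'})<2$, then $[x_C,x_{C'}]$ would avoid $p$, providing a path in $X\setminus\{p\}$ joining $C$ to $C'$; this contradicts $C\neq C'$. Since $d(x_C,x_{C'})\le d(x_C,p)+d(p,x_{C'})=2$ by the triangle inequality, equality holds.

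Now the set $M=\{x_C:C\text{ a component of }X\setminus\{p\}\}$ is a test for $F_X(p,1,1,2)\geq|M|=\mathrm{val}(p)\geq\aleph_0$. Proposition~\ref{uncount:prop} then upgrades this to $F_X(p,1,1,2)\geq 2^{\aleph_0}$. Pick a test $M'$ witnessing $F_X(p,1,1,2)\geq 2^{\aleph_0}$: its elements all lie on the unit sphere about $p$ and have pairwise distance at least $2$, hence exactly $2$. The same tripod argument shows that any two distinct points of $M'$ lie in distinct components of $X\setminus\{p\}$, so the valency at $p$ is at least $|M'|\geq 2^{\aleph_0}$.

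The main technical point is the constant use of the tree inequality $[x,y]\subseteq [p,x]\cup[p,y]$ to convert a distance equality $d(x,y)=d(x,p)+d(p,y)$ into separation by $\{p\}$; the geodesic-extension hypothesis is only needed to guarantee that each component meets the unit sphere around $p$, so I do not expect any serious obstacle beyond organising these observations cleanly.
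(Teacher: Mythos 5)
Your proposal is correct and follows essentially the same route as the paper, which simply observes that the geodesic-extension hypothesis makes $F_X(p,r,r,2r)$ equal to the valency at $p$ and then invokes Proposition~\ref{uncount:prop}; your argument is just this with $r=1$ and the details written out. The only wobble is ``pick a test $M'$ witnessing $F_X(p,1,1,2)\geq 2^{\aleph_0}$'' (the supremum need not be attained a priori): it is cleaner to note that every test maps injectively to the set of components of $X\setminus\{p\}$, so the valency bounds all test cardinalities and hence is at least $F_X(p,1,1,2)\geq 2^{\aleph_0}$.
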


\begin{proof}
Our assumption on geodesics implies that, for each $r>0$, $F_X(p,r,r,2r)$ equals the valency at $p$.

\end{proof}

\begin{defn}
In a real tree, a point of valency greater than 2 will be called a branching point.
\end{defn}

\begin{prop}\label{isolated:prop}
Let $X$ be a real tree such that each geodesic can be extended and the valency at a point $p$ is finite. If $X$ is an asymptotic cone then $p$ is isolated from the other branching points.

\end{prop}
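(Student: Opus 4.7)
Assume for contradiction that distinct branching points $q_n\in X\setminus\{p\}$ satisfy $s_n:=d(p,q_n)\to 0$. Finite valency at $p$ together with pigeonhole lets us pass to a subsequence in which all $q_n$ lie in a single connected component of $X\setminus\{p\}$, and a Ramsey argument applied to the tree order $q_n\preceq q_m\iff q_n\in[p,q_m]$ further reduces to a chain or an antichain. Fix $r>0$ with $s_n<r/2$ throughout. At each $q_n$, the hypothesis that the valency is at least $3$ together with the extendability of geodesics lets us pick a geodesic ray $\eta_n\colon[0,\infty)\to X$ with $\eta_n(0)=q_n$, in a direction not towards $p$ (and in the chain case not along the common geodesic containing all $q_n$). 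Set $v_n:=\eta_n(r-s_n)$, so $d(p,v_n)=r$. The real-tree four-point condition shows that the confluence of $[p,v_n]$ and $[p,v_m]$ lies at distance $c_{nm}\le\min(s_n,s_m)$ from $p$---equality in the chain case (confluence at the $q_j$ with smaller $s_j$), strict inequality in the antichain case. Hence $d(v_n,v_m)=2(r-c_{nm})\to 2r$ as $\min(n,m)\to\infty$, and $F_X(p,r,r,2r-\epsilon)\geq\aleph_0$ for every standard $\epsilon>0$.

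To conclude, I would run the nonstandard argument from the proof of Proposition~\ref{uncount:prop} parametrically in $\epsilon$. For each standard $N$ and $\epsilon>0$, lifting an $N$-tuple of the $v_n$'s yields an internal $N$-tuple in $\st Y$ at distance within $[(r-\delta)\nu,(r+\delta)\nu]$ of $\tilde p$ and pairwise at distance $\ge (2r-\epsilon-\delta)\nu$, for some infinitesimal $\delta$. Lemma~\ref{cof:lem} supplies a common $\delta$, and overspill in $N$ combined with underspill in $\epsilon$ produce infinitesimals $\delta,\epsilon>0$ and an infinite $\rho\in\st\N$ with
\[
F_{\st Y}\bigl(\tilde p,(r-\delta)\nu,(r+\delta)\nu,(2r-\epsilon-\delta)\nu\bigr)\geq\rho.
\]
Since geodesics extend in each of the $k$ directions at $p$, $F_X(p,r,r,2r)=k$ (pairwise distance $2r$ at distance $r$ from $p$ forces different directions at $p$, and one gets exactly one point per direction). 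Remark~\ref{properties:rem}(3) then gives
\[
2^{\aleph_0}=st(\rho)\leq st\bigl(F_{\st Y}(\cdots)\bigr)\leq F_X(p,r,r,2r)=k,
\]
a contradiction with $k<\aleph_0$. The delicate points are the chain-case confluence calculation, which requires choosing a genuinely transverse direction at each $q_n$ so that the confluence lands at the closer-to-$p$ branching point rather than further out, and the layered use of saturation, overspill, and underspill needed to push $l$ to $2r$ while keeping $F_{\st Y}$ internally infinite.
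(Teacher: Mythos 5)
Your proposal is correct and follows essentially the same route as the paper: show that non-isolation forces $F_X(p,r,r,2r-\epsilon)$ to be infinite for every standard $\epsilon>0$, lift tests to $\st Y$ with infinitesimal errors, uniformize via Lemma~\ref{cof:lem}, use an underspill-type argument to make the separation parameter infinitesimally close to $2r$ while keeping $F_{\st Y}$ at least some infinite $\rho$, and contradict the finiteness of $F_X(p,r,r,2r)$ (the valency) via Remark~\ref{properties:rem}(3). Your chain/antichain analysis with transverse directions just fills in, correctly, the geometric step that the paper leaves implicit.
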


\begin{proof}
Let $n$ be the valency of $X$ at $p$. For each $r>0$, $F_X(p,r,r,2r)=n$. Assume that $p$ is not isolated from the other branching points. Then for each $k\in\N$ (and $k>1/2r$) we have that $F_X(p,r,r,2r-1/k)$ is infinite.
If $X$ is an asymptotic cone of $Y$ with scaling factor $\nu$ and $\pi\in\st Y$ is a representative for $p$, proceeding as in the proof of Proposition~\ref{uncount:prop}, for each $k$ we can find a positive infinitesimal $\xi_k$ and a positive infinite $\mu_k$ such that $F_{\st Y}(\pi, (r-\xi_k)\nu,(r+\xi_k)\nu, (2r-1/k-\xi_k)\nu)\geq\mu_k$.
Let us fix a positive infinitesimal $\xi$ greater than any $\xi_k$ and a positive infinite $\mu$ smaller than any $\mu_k$. We have that $\{\alpha|F_{\st Y}(\pi, (r-\xi)\nu,(r+\xi)\nu, (2r-\alpha)\nu)\geq \mu\}$ contains, for each $k$, elements of $\st \R$ smaller than $1/k$ (for example $1/(k+1)+\xi_{k+1}$), hence it contains an infinitesimal $\eta$. This implies that $F_X(p,r,r,2r)$ is infinite (using point $(3)$ of Remark~\ref{properties:rem}), a contradiction.

\end{proof}

Putting together Corollary~\ref{nocount:cor} and Proposition~\ref{isolated:prop} in the case of homogeneous real trees, we have:

\begin{cor}
If $X$ is a homogeneous real tree and an asymptotic cone, then it is a point, a line or it has valency at least $2^{\aleph_0}$ at each point.
\end{cor}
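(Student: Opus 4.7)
The plan is to do a case analysis on the common valency $v$ of points of $X$, which is well-defined by homogeneity. The trivial case $X=\{p\}$ gives a point, so assume $|X|\geq 2$. Since $X$ is a homogeneous real tree, every geodesic can be extended, so any point lies in the interior of some geodesic, which forces $v\geq 2$.

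For $v=2$, I would argue directly that $X$ is isometric to $\R$. Fix $p\in X$ and two geodesic rays from $p$ heading in distinct directions: if they shared any common initial segment past $p$, the point where they split would have valency at least $3$, contradicting $v=2$; thus together they form a bi-infinite geodesic $\ell$ through $p$. If some $q\in X$ lay off $\ell$, the geodesic from $q$ to its nearest point on $\ell$ (which exists because $\ell$ is a complete convex subset) would produce a tripod whose central vertex has valency $\geq 3$, again a contradiction. Hence $X=\ell$.

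For $v$ finite and $\geq 3$: by homogeneity every point of $X$ is a branching point, so Proposition~\ref{isolated:prop} forces each $p$ to be isolated from the other branching points, i.e.\ some neighborhood of $p$ contains no other branching point. Since every other point is a branching point, $\{p\}$ is open, and by homogeneity $X$ is discrete. This is incompatible with the existence of nondegenerate geodesics in $X$, so this case cannot occur. For $v$ infinite: Corollary~\ref{nocount:cor} applies at any $p$ (since in a homogeneous real tree every geodesic extends) and immediately upgrades the valency to $v\geq 2^{\aleph_0}$.

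The main (minor) obstacle is the $v=2$ case, because Corollary~\ref{nocount:cor} and Proposition~\ref{isolated:prop} together handle $v\geq 3$ and $v$ infinite but say nothing about $v=2$; this gap has to be closed by the elementary tree-geometric argument above, using the existence and uniqueness of nearest-point projections onto complete geodesic subsets of a real tree.
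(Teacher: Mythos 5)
Your proof is correct and follows essentially the same route as the paper, which simply states the corollary as a consequence of Corollary~\ref{nocount:cor} and Proposition~\ref{isolated:prop}: a case analysis on the common valency, using Proposition~\ref{isolated:prop} to rule out finite valency at least $3$ and Corollary~\ref{nocount:cor} for infinite valency. The extra details you supply (valency $2$ forces a line, valency at least $2$ once $|X|\geq 2$) are exactly the elementary steps the paper leaves implicit.
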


As Cayley graphs of groups have cardinality $2^{\aleph_0}$, their asymptotic cones have cardinality at most $2^{\aleph_0}$ and hence:

\begin{cor}~\label{3trees:cor}
If the real tree $X$ is the asymptotic cone of a group, then it is a point, a line or a tree with valency $2^{\aleph_0}$ at each point.
\end{cor}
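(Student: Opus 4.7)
The plan is to combine the preceding corollary on homogeneous real trees with a cardinality bound on asymptotic cones of groups.

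First, I would invoke Lemma~\ref{propconegroups:lem} to observe that any asymptotic cone $C_S(G,g,\nu)$ of a group is homogeneous. Therefore, if such a cone happens to be a real tree, the hypotheses of the corollary immediately preceding Corollary~\ref{3trees:cor} are satisfied, and we conclude that $X$ is either a point, a line, or a homogeneous real tree with valency at least $2^{\aleph_0}$ at every point.

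The second step is to establish the matching upper bound on the valency. I would argue that $|\mathcal{CG}_S(G)| = 2^{\aleph_0}$ (the vertex set is countable but the edges are unit intervals), so by the remark on cardinalities of nonstandard extensions, $|\st \mathcal{CG}_S(G)| \leq 2^{\aleph_0}$. The asymptotic cone is by definition a quotient of a subset of $\st \mathcal{CG}_S(G)$, so $|X| \leq 2^{\aleph_0}$. Since the valency at any point $p$ is the number of connected components of $X\setminus\{p\}$, and each such component contains at least one point of $X$, the valency at $p$ is bounded above by $|X| \leq 2^{\aleph_0}$.

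Finally, putting the two bounds together, in the non-degenerate case (neither a point nor a line) the valency is sandwiched between $2^{\aleph_0}$ and $2^{\aleph_0}$, hence equals $2^{\aleph_0}$ at every point. There is no real obstacle here: the work has been done in Corollary~\ref{nocount:cor} and Proposition~\ref{isolated:prop}, and this final corollary is essentially a bookkeeping step combining homogeneity of cones of groups with the trivial cardinality estimate on $\st \mathcal{CG}_S(G)$.
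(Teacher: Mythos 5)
Your argument is correct and is essentially the paper's own proof: the paper also deduces the corollary by combining the preceding corollary on homogeneous real trees (via homogeneity of cones of groups from Lemma~\ref{propconegroups:lem}) with the observation that Cayley graphs have cardinality $2^{\aleph_0}$, so their asymptotic cones, and hence the valencies, are bounded above by $2^{\aleph_0}$. You have merely spelled out the cardinality bookkeeping that the paper leaves implicit.
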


\par
In view of the following theorem, proved in~\cite{DP}, this corollary implies that there are 3 possible isometry types of real trees appearing as asymptotic cones of groups.

\begin{thm}
If $T_1, T_2$ are homogeneous real trees such that the valency at a point in $T_1$ is the same as the valency at a point in $T_2$, then $T_1$ and $T_2$ are isometric.
\end{thm}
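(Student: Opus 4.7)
The plan is to build an isometry $T_1 \to T_2$ by a transfinite back-and-forth argument. First dispose of trivial cases: valency $0$ means a single point; valency $1$ would make every point an endpoint, impossible for a connected tree with more than one point; valency $2$ at every point forces the tree to be isometric to $\R$. So I may assume the common valency $\kappa \geq 3$, in which case no point is an endpoint and, by completeness, every direction at every point extends to a geodesic ray of arbitrary length.

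Fix basepoints $p_1 \in T_1$, $p_2 \in T_2$, enumerate $T_1$ and $T_2$ by the ordinal $\lambda = |T_1| = |T_2|$ (the cardinalities agree because they are determined by $\kappa$), and transfinitely build partial isometries $\phi_\alpha : A_\alpha \to B_\alpha$ starting from $\phi_0(p_1) = p_2$, where $A_\alpha \subseteq T_1$ and $B_\alpha \subseteq T_2$ are convex subtrees. Alternate whose turn it is. Maintain the invariant that $\phi_\alpha$ induces, at every $y \in A_\alpha$, a bijection between the set of directions at $y$ used by $A_\alpha$ and the set of directions at $\phi_\alpha(y)$ used by $B_\alpha$. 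To add $x \in T_1 \setminus A_\alpha$, project onto $y \in \overline{A_\alpha}$; the segment $[y,x]$ occupies a direction at $y$ unused by $A_\alpha$, and because both full valencies equal $\kappa$ while the invariant matches used directions, at least one direction at $\phi_\alpha(y)$ in $T_2$ remains unused. Pick one by choice and extend $\phi_\alpha$ by an isometric copy of $[y,x]$ along that direction.

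That the extension is a global isometry reduces to the tree identity $d(a,t) = d(a,y) + d(y,t)$ for $a \in A_\alpha$, $t \in [y,x]$, which holds identically on both sides because the new segment exits the existing subtree precisely at $y$. At limit stages take unions; convexity and the invariant are preserved because used directions only accumulate monotonically. After $\lambda$ stages every point has been included, so $\phi = \bigcup_\alpha \phi_\alpha$ is a surjective isometry; if needed, completeness of $T_1$ and $T_2$ allows a final extension by continuity.

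The main obstacle is the extension step itself: first, securing a free direction in $T_2$, which is where the equal-valency hypothesis enters and which requires tracking the invariant precisely across all stages (including limits, where one must check directions accumulate by supremum); second, once a fresh direction is chosen, actually producing a geodesic of the required length in $T_2$ along that direction, for which completeness together with the absence of endpoints guaranteed by $\kappa \geq 3$ is exactly what is needed.
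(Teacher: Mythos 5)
The paper itself does not prove this theorem; it quotes it from Dyubina--Polterovich~\cite{DP}, so your back-and-forth construction must stand on its own. The overall scheme (a transfinite back-and-forth extending partial isometries between convex subtrees, grafting one projected segment at a time) is a viable route, but the step you yourself identify as the crux --- ``securing a free direction in $T_2$'' --- is not delivered by the invariant you maintain. Your invariant only records that the directions at $y$ used by $A_\alpha$ and the directions at $\phi_\alpha(y)$ used by $B_\alpha$ are in bijection, which is automatic since $\phi_\alpha$ is a surjective isometry $A_\alpha\to B_\alpha$. From ``used sets in bijection, both total valencies equal $\kappa$'' you infer that an unused direction at $y$ forces an unused direction at $\phi_\alpha(y)$; that inference is valid only when the used set has cardinality strictly less than $\kappa$. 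If $\kappa$ is infinite and the induction runs for more than $\kappa$ stages (which happens precisely when $\kappa<|T_1|$, e.g.\ trees of valency $\aleph_0$ and cardinality $2^{\aleph_0}$), the arbitrary choices in ``pick one by choice'' can exhaust \emph{all} $\kappa$ directions at some $\phi_\alpha(y)$ while $y$ still has unused directions, both used sets having cardinality $\kappa$; the construction then gets stuck exactly at your extension step. The repair is a stronger bookkeeping device: when a point first enters the domain (resp.\ range), fix once and for all a bijection between \emph{all} $\kappa$ directions at that point and all $\kappa$ directions at its image, compatible with the (at that moment finite) used sets, and always extend along the pre-assigned partner direction; then unused corresponds to unused forever, and nothing needs rechecking at limit stages.

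Two lesser points. The claim $|T_1|=|T_2|$ ``because the cardinalities are determined by $\kappa$'' is unjustified where you invoke it (it is a consequence of the theorem being proved) and also unnecessary: enumerate each tree by its own cardinality and interleave the enumerations. Also, the projection of $x$ may land in $\overline{A_\alpha}\setminus A_\alpha$, so you must either keep the sets $A_\alpha$ closed (take closures at each step, extending $\phi_\alpha$ by completeness of $T_2$ and checking the direction bookkeeping at the new points) or extend to the projection point before grafting; this is routine but should be stated. Note finally that in the only case the paper actually uses (valency $2^{\aleph_0}$ trees of cardinality $2^{\aleph_0}$, arising as asymptotic cones of groups) the counting problem above cannot occur, since at any stage $\alpha<2^{\aleph_0}$ fewer than $2^{\aleph_0}$ directions have been used at any given point; but the theorem as stated allows arbitrary valency, and there your key step genuinely needs the stronger invariant.
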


Now, let us analyze the consequences of Proposition~\ref{uncount:prop} in the special case $r_1=0$, proving Theorem~\ref{proper:thm} which we restate for the convenience of the reader.

\begin{thm}
\begin{enumerate}
\item
If the separable metric space $X$ is an asymptotic cone, then $X$ is proper.
\item
Suppose that for some $\mu\ll\nu$ and some $p\in\st Y$ each asymptotic cone of the metric space $Y$ with scaling factor $\nu'$ such that $\mu\leq\nu'\leq\nu$ and basepoint $p$ is separable and that $X=C(Y,p,\nu)$ is homogeneous. Then $X$ has finite Hausdorff and Minkowski dimension.
\end{enumerate}
\end{thm}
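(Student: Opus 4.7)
For (1), I argue by contradiction using Proposition~\ref{uncount:prop}. Asymptotic cones are complete (Lemma~\ref{propcone:lem}), so $X$ is proper iff every closed ball is totally bounded. If some $\overline B(p,R)\subseteq X$ fails to be totally bounded, then for some $\epsilon>0$ it contains an infinite $\epsilon$-separated subset, so $F_X(p,0,R,\epsilon)\geq\aleph_0$. Proposition~\ref{uncount:prop} upgrades this to $F_X(p,0,R,\epsilon)\geq 2^{\aleph_0}$, and its proof in fact exhibits an $\epsilon$-separated subset of cardinality at least $2^{\aleph_0}$, as the projection of an internal test in $\st Y$ whose cardinality is $\geq 2^{\aleph_0}$ by Lemma~\ref{finuncount:lem}. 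The resulting $2^{\aleph_0}$ pairwise disjoint open $\epsilon/3$-balls in $X$ contradict separability.

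For (2), write $X_{\nu'}:=C(Y,p,\nu')$. The decisive step is to produce $N\in\N$ with
\[
F_{\st Y}(p,0,\nu',\nu'/2)\leq N \qquad \text{for every } \nu'\in[\mu,\nu].
\]
First, $F_{\st Y}(p,0,\nu',\nu'/2)\in\N$ for each such $\nu'$: otherwise there is an internal $\nu'/2$-separated subset of $\overline B(p,\nu')\subseteq\st Y$ whose cardinality exceeds every standard integer, hence is $\geq 2^{\aleph_0}$ by Lemma~\ref{finuncount:lem}, and its projection to $X_{\nu'}$ is a $1/2$-separated subset of $\overline B([p],1)$ of the same cardinality, contradicting separability of $X_{\nu'}$. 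Since $\nu'\mapsto F_{\st Y}(p,0,\nu',\nu'/2)$ is internal, overspill applied to the sets $A_n=\{\nu'\in[\mu,\nu]:F_{\st Y}(p,0,\nu',\nu'/2)>n\}$ forces some $A_n$ to be empty; otherwise the internal set $\{n\in\st\N:A_n\neq\emptyset\}$ contains every standard $n$ and hence an infinite $n_0$, producing $\nu'\in[\mu,\nu]$ with $F_{\st Y}(p,0,\nu',\nu'/2)>n_0$, contradicting the previous pointwise finiteness.

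Fix a standard $c\in(1/2,1/\sqrt2)$. For each standard $r\in(0,1/(2c)]$, any $cr$-separated family $\{[x_1],\dots,[x_\ell]\}\subseteq\overline B_X([p],r)$ is finite (since $X$ is proper by (1)) and lifts to $\st Y$; picking a positive infinitesimal $\zeta$ dominating the finitely many resulting errors, the lifts $\{x_i\}$ are $(cr-\zeta)\nu$-separated and lie in $\overline B(p,(r+\zeta)\nu)$. Setting $\nu'':=2(cr-\zeta)\nu$, the inequalities $r\leq 1/(2c)$ and $2c>1$ give $\nu''\in[\mu,\nu]$, $\overline B(p,(r+\zeta)\nu)\subseteq\overline B(p,\nu'')$ and $\nu''/2=(cr-\zeta)\nu$; hence $\ell\leq F_{\st Y}(p,0,\nu'',\nu''/2)\leq N$ and $F_X([p],0,r,cr)\leq N$. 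By homogeneity this holds for balls of radius $r\leq 1/(2c)$ at any center, and starting from a cover of $\overline B_X([p],1)$ by $M_0$ balls of radius $c$ (available because $X$ is proper) we iterate to cover $\overline B_X([p],1)$ by at most $M_0 N^{k-1}$ closed balls of radius $c^k$, for every $k\geq 1$.

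With $r_n=c^n$ and $k_n=M_0 N^{n-1}$ we have $k_n r_n^P=(M_0/N)(Nc^P)^n$, which is bounded once $P\geq\log N/\log(1/c)$; thus $X$ has finite Minkowski dimension, bounded by $\log_2 N$ upon letting $c\to 1/2$. The same covers give vanishing $P'$-dimensional Hausdorff measure of $\overline B_X([p],1)$ for every $P'>\log N/\log(1/c)$, so the Hausdorff dimension of $X$ is also finite. The hardest step is the overspill argument producing the uniform $N$, where one must verify that $\nu'\mapsto F_{\st Y}(p,0,\nu',\nu'/2)$ is internal and that the projection construction really delivers $2^{\aleph_0}$-sized separated subsets of the separable cones $X_{\nu'}$; the secondary bookkeeping of infinitesimal slack in lifting is handled by restricting to $r\leq 1/(2c)$ and exploiting $2c>1$.
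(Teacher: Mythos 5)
Your proof is correct and takes essentially the same route as the paper: in (1) you lift an infinite separated family to $\st Y$, use Lemma~\ref{finuncount:lem} (via Proposition~\ref{uncount:prop}) to get a separated set of cardinality $2^{\aleph_0}$, and project back to contradict separability, exactly as in the paper; in (2) you likewise use separability of the cones $C(Y,p,\nu')$ for $\nu'\in[\mu,\nu]$ to force finiteness of the internal packing numbers $F_{\st Y}$ at the corresponding scales, with an overspill/saturation step to rule out unboundedness, and then convert the resulting uniform bound into a doubling-type covering estimate giving finite Minkowski and Hausdorff dimension. The only differences are organizational (you extract a uniform bound $N$ over all $\nu'\in[\mu,\nu]$ first and then transfer it to standard scales, whereas the paper reaches a contradiction at a single infinitesimal scale $\xi$ with $\xi\nu\in[\mu,\nu]$), plus your more explicit final bookkeeping, which the paper leaves implicit.
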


\begin{proof}
$(1)$ Let $X$ be the asymptotic cone of $Y$ with basepoint $p$ and scaling factor $\nu$. Suppose that $B=\overline{B}([p],r)\subseteq X$ is not compact. Then, as $X$ and therefore $B$ is complete (by Lemma~\ref{propcone:lem}-(1)), there exists some $0<\epsilon<1$ such that $B$ cannot be covered by finitely many balls of radius $\epsilon r$. Set $r'=\epsilon r/2$. Suppose by contradiction that $F_{\st Y}(p,0,(r+1)\nu, r'\nu)$ is finite (say equal to $n$), consider a test $M\subseteq \st Y$ of $F_{\st Y}(p,0,(r+1)\nu, r'\nu)\geq n$. Let $\{x_1,\dots,x_n\}\subseteq X$ be the projection of $M$ onto $X$ (the $x_i$'s are distinct because $r'>0$).
As, by our hypothesis, $B$ cannot be a subset of $\bigcup B(x_i,\epsilon r)$, we can find $[y]\in B\backslash\bigcup B(x_i,\epsilon r)$. As $r'<\epsilon r$, it is easily seen that $M\cup \{y\}$ is a test for $F_{\st Y}(p,0,(r+1)\nu, r'\nu)\geq n+1$, which is a contradiction: in fact, $d(p,y)<(r+1)\nu$ as $st(d(p,y)/\nu)=r$, and, for each $m\in M$, $d(y,m)\geq r'\nu$ as $st(d(p,m)/\nu)\geq \epsilon r>r'$.
\par
Therefore, $F_{\st Y}(p,0,(r+1)\nu, r'\nu)\geq \rho$, for some infinite $\rho\in\st\N$.
Let us show that this implies that $B([p],r+1)$ is not separable, as it should be as $X$ is separable (subsets of separable \emph{metric} spaces are separable). We have that there exists a test $M$ for $F_X([p],0,r+1,r')\geq 2^{\aleph_0}$, which is obtained projecting onto $X$ a test for $F_{\st Y}(p,0,(r+1)\nu, r'\nu)\geq \rho$. If $m_1,m_2\in M$ are distinct, we have $B(m_1,r'/2)\cap B(m_2,r'/2)=\emptyset$ as $d(m_1,m_2)\geq r'$. This gives an uncountable family of non-intersecting balls whose centers lie in $B([p],r+1)$. The existence of such a family clearly implies that $B([p],r+1)$ is not a separable space.
\par
$(2)$ We will prove that there exists $n\in\N$ such that for each $r$ with $0<r\leq 1$ we have that $B([p],r)$ can be covered by at most $n(r)$ balls of radius $r/2$.
\par
By $(1)$, we know that fixing $r$ we can choose $n(r)$ such that $B([p],r)$ can be covered by at most $n$ balls of radius $r/2$. Suppose by contradiction that $n(r)$ is not bounded when $r\to 0$. Notice that for any radius $r>0$ we have that $F_{\st Y}(p,0,2r\nu, r\nu/8)$, if finite, provides an upper bound for $n(r)$. In fact, consider a maximal test $M$. For each $m\in M$ pick $p(m)\in X$ such that $p(m)\in B([m],r/4)\cap \overline{B}([p],r)$, if such $p(m)$ exists. We have that $\overline{B}([p],r)$ is contained in $\bigcup B(p(m),r/2)$, because if $[x]\in \overline{B}([p],r)\backslash \left(\bigcup B(p(m),r/2)\right)$, then it is easily seen that $M\cup \{x\}$ is a test for $F_{\st Y}(p,0,2r\nu, r\nu/8)\geq |M|+1$.
\par
So, we have that $F_{\st Y}(p,0,2r\nu, r\nu/8)$ is not bounded by any natural number for $r\to 0$. In particular, we can find an infinitesimal $\xi$ such that $F_{\st Y}(p,0,2\xi\nu,\xi\nu/8)\geq \rho$ for some infinite $\rho$, and we can also choose it so that $\xi\nu\geq\mu$. Using essentially the same argument as in the last part of $(1)$ we get that $C(Y,p,\xi\nu)$ is not separable, a contradiction.

\end{proof}

\begin{cor}\label{sepvirtnilp:cor}
If there exist $\nu_1\ll\nu_2$ such that all the asymptotic cones of the group $G$ with scaling factor $\nu\in[\nu_1,\nu_2]$ are separable, then $G$ is virtually nilpotent.

\end{cor}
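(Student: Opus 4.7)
The plan is to combine Theorem~\ref{proper:thm} with the result of Point cited in the introduction, namely that a group admitting at least one proper asymptotic cone of finite Minkowski dimension is virtually nilpotent.

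First I would fix a finite generating set $S$ of $G$ and take the basepoint $p = e \in \st G$ (the identity). Setting $\mu = \nu_1$ and $\nu = \nu_2$, the hypothesis of the corollary is exactly the hypothesis on $Y = \mathcal{CG}_S(G)$ required by Theorem~\ref{proper:thm}(2): for every $\nu' \in [\nu_1, \nu_2]$ the cone $C_S(G, e, \nu')$ is separable. Moreover $X := C_S(G, e, \nu_2)$ is homogeneous by Lemma~\ref{propconegroups:lem}(1). So Theorem~\ref{proper:thm}(2) applies and yields that $X$ has finite Minkowski dimension.

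Next I would invoke Theorem~\ref{proper:thm}(1): since $X$ is a separable asymptotic cone, it is proper. Thus $X$ is a proper asymptotic cone of $G$ with finite Minkowski dimension, and Point's theorem (\cite{Po}) concludes that $G$ is virtually nilpotent.

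The only thing one needs to be a bit careful about is that Theorem~\ref{proper:thm}(2) is stated for metric spaces and scaling factors $\nu'$ ranging in $[\mu,\nu]$ with $\mu\ll\nu$, which is satisfied by $\nu_1 \ll \nu_2$; no step presents a genuine obstacle, as the corollary is essentially a packaging of Theorem~\ref{proper:thm} together with an external input. Note also that the conclusion is independent of the choice of generating set $S$, since virtual nilpotency is a property of the group alone (and, consistently, Lemma~\ref{propconegroups:lem}(3) guarantees that separability of the cones does not depend on $S$ either).
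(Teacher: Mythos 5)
Your proposal is correct and follows essentially the same route as the paper: apply Theorem~\ref{proper:thm} (with $Y$ the Cayley graph, basepoint $e$, $\mu=\nu_1$, $\nu=\nu_2$) to obtain a proper asymptotic cone of $G$ of finite Minkowski dimension, then invoke Point's theorem from~\cite{Po}. The extra details you supply (homogeneity from Lemma~\ref{propconegroups:lem} and properness via part (1)) are exactly what the paper's terse proof implicitly uses.
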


\begin{proof}
Using the theorem above we get an asymptotic cone of $G$ which is proper and has finite Minkowski dimension, and then we can directly apply the main result of~\cite{Po}.

\end{proof}

\begin{rem}
The corollary above answers the question (asked in~\cite[Remark 6.4-(3)]{vDW}) whether or not local compactness of one asymptotic cone of a group implies that the group is of polynomial growth, even if the proof is far from being direct. 

\end{rem}

Theorem~\ref{proper:thm} provides many examples of metric spaces which do not appear as asymptotic cones, for example the separable Hilbert space. In contrast, we will prove below a ``positive'' result on spaces which are realized as asymptotic cones.

\begin{thm}\label{positive:thm}
If the metric space $X$ is proper, then it is an asymptotic cone of some metric space $Y$. If $X$ is also geodesic and unbounded, we can choose $Y$ to be geodesic as well.
\end{thm}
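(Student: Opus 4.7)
The plan is to construct $Y$ as a (possibly geodesic) metric space containing rescaled copies $Y_n=(X,n\,d_X)$ of $X$ for $n\in\N$, arranged so that in the nonstandard extension only the copy $Y_\nu$ (for a suitably chosen infinite $\nu\in\st\N$) contributes to the cone $C(Y,p,\nu)$, with $p$ taken to be the basepoint $x_0^\nu$ of $Y_\nu$. Since $X$ is proper, Proposition~\ref{compactnonstchar:prop} ensures that each $x\in\st X$ with $d_X(x_0,x)\in O(1)$ has a unique standard part $st(x)\in X$ with $d_X(x,st(x))\in o(1)$, and the assignment $[(\nu,x)]\mapsto st(x)$ will be a well-defined isometric bijection from $C(Y,p,\nu)$ onto $X$.

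For part $(1)$, I would take $Y=\bigsqcup_n Y_n$ as a set and put on it the metric
\[
d_Y\bigl((n,x),(m,y)\bigr)=\begin{cases} n\,d_X(x,y), & n=m, \\ n\,d_X(x_0,x) + 2^{\max(n,m)} + m\,d_X(x_0,y), & n\neq m,\end{cases}
\]
for which the triangle inequality is routine (using $2^a+2^b\geq 2^{\max(a,b)}$). Then, for $p=(\nu,x_0)$ and scaling factor $\nu$, any $(n,x)$ with $n\neq\nu$ satisfies $d_Y(p,(n,x))\geq 2^\nu$, and since $2^\nu/\nu$ is infinite this distance lies outside $O(\nu)$, so such a point is not represented in the cone. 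Hence the cone consists exactly of the equivalence classes $[(\nu,x)]$ with $d_X(x_0,x)\in O(1)$, and the standard-part map described above yields the required isometry onto $X$.

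For part $(2)$, where $X$ is additionally geodesic and unbounded, one must upgrade $Y$ to be geodesic. A natural choice is to replace the "jump cost" $2^{\max(n,m)}$ by an actual geodesic backbone $[0,\infty)$, along which each $Y_n$ is attached at position $2^n$ via its basepoint $x_0^n$, producing a geodesic comb. In order to prevent the backbone from producing spurious points in the cone, one uses the fact that $X$ admits a geodesic ray $\gamma\colon[0,\infty)\to X$ from $x_0$ (guaranteed by properness, geodesicity, and unboundedness of $X$) and identifies the backbone segment $[2^n,2^{n+1}]$ isometrically with the corresponding initial piece of the rescaled ray $\gamma^n\subset Y_n$. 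In this way the portion of the backbone within $O(\nu)$ of $p=x_0^\nu$ in the forward direction lies inside $Y_\nu$ along $\gamma^\nu$ and so is automatically absorbed into the image of $X$ in the cone.

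The main obstacle I expect is in the geodesic case: one must verify carefully that $C(Y,p,\nu)$ really consists only of the classes $[(\nu,x)]$ coming from $\st Y_\nu$, with no extra contribution either from neighbouring copies $Y_{\nu\pm 1}$ (which should be pushed outside $O(\nu)$ of $p$ by the exponential spacing) or from the backbone stretch "behind" $p$ lying in $Y_{\nu-1}$. This requires a careful accounting of which points of $\st Y$ lie in $O(\nu)$ of $p$ and of how their equivalence classes interact with the standard-part map on $\st X$.
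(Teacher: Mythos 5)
Your part (1) is correct and is essentially the paper's construction in a lightly different dress: a telescope of rescaled copies $(X,n\,d_X)$ with a ``jump cost'' between copies that is $\gg\nu$ at the relevant index, plus the nonstandard characterization of compactness (Proposition~\ref{compactnonstchar:prop}) to get surjectivity of the standard-part map; the paper's bounded case uses the jump cost $|n^2-(n')^2|$ in exactly this spirit, and your exponential spacing works just as well.

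Part (2), however, has a genuine gap, and it is precisely the point you flagged but did not resolve: with your gluing, the copy $Y_{\nu-1}$ is attached to $Y_\nu$ \emph{at the cone basepoint itself}, since the far end of the backbone stretch $[2^{\nu-1},2^{\nu}]$ (which runs inside $Y_{\nu-1}$ along $\gamma^{\nu-1}$) is identified with $x_0^{\nu}=p$. Take $w\in \st Y_{\nu-1}$ to be the point at arc length $2^{\nu-1}-\nu$ along $\gamma^{\nu-1}$; then $d_Y(w,p)=\nu\in O(\nu)$, so $[w]$ lies in $C(Y,p,\nu)$, but since the copies are glued in a chain, every path from $Y_{\nu-1}$ to $Y_\nu$ passes through the gluing point, whence $d_Y\bigl(w,(\nu,x)\bigr)=\nu+\nu\,d_X(x_0,x)\geq\nu$ for every standard $x\in X$, and $[w]$ is \emph{not} in the image of your embedding. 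So the cone strictly contains $X$ (an extra piece, essentially an ultralimit of $X$ based at points escaping along $\gamma$, hangs off $[p]$); no ``careful accounting'' can fix this, because the defect is in the attachment scheme: the incoming attachment point of copy $\nu$ must itself be at distance $\gg\nu$ from the basepoint. This is exactly how the paper arranges things: it attaches the copy $(X,n\,d_X)$ to the backbone at position $n$ via a point $p_n$ with $d(p_0,p_n)\to\infty$ (only unboundedness is used, no geodesic ray), and takes the cone basepoint to be $p_0$ in copy $\nu$; then $d(p_0,p_\mu)\gg 1$ for every infinite $\mu$, so both attachment points of copy $\nu$, the whole backbone and all other copies are at distance $\gg\nu$ from the basepoint and disappear in the cone, while geodesicity of $Y$ is automatic from gluing geodesic pieces along points.
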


Notice that if $X$ is not geodesic, then $Y$ cannot be geodesic by Lemma~\ref{propcone:lem}.
\par
The first part of the statement above has been proven independently by Scheele in~\cite{Sc}, using a different construction. Our construction is a slight variation of the one which appears in~\cite[Section 5]{FS}, translated in the nonstandard setting.

\begin{proof}
Let us first assume that $X$ is unbounded. Let $\{p_n\}$ be a sequence of points of $X$ such that $d(p_0,p_n)\to\infty$. Set $Y=(X\times \N)\cup \bigcup_{n\in\N} (\{p_n\}\times[n,n+1])$. Define a distance $\widetilde{d}$ on $Y$ in the following way:
$$
\widetilde{d} ((x,t),(x',t'))=
\left\{
\begin{array}{lll}
t\cdot d(x,p_t) + t'\cdot d(p_{t'},x') + |t-t'| & {\rm if} & t\neq t'\\
t\cdot d(x,x') & {\rm if} & t=t'
\end{array}
\right.
$$
It is quite clear that $Y$ is a metric space, and that it is geodesic if $X$ is geodesic. Consider now $\st Y=(\st X\times \st \N)\cup \bigcup_{\mu\in\st\N} (\{p_{\mu}\}\times[\mu,\mu+1])$, and an infinite $\nu\in\st\N$. We want to show that the asymptotic cone $Z$ of $Y$ with basepoint $(p_0,\nu)$ and scaling factor $\nu$ is isometric to $X$. The isometry $i:X\to Z$ can be defined simply by $x\mapsto [(x,\nu)]$. It is readily checked that it is an isometric embedding. So far we did not use properness or that $d(p_0,p_n)\to\infty$, so we obtained the following.
\begin{rem}
Any metric space $X$ can be isometrically embedded in an asymptotic cone of a metric space $Y$. If $X$ is geodesic, we can require $Y$ to be geodesic.
\end{rem}
Section 5 of~\cite{FS} already contains a proof of this fact.
\par
We are left to prove that $i$ is surjective. First of all, notice that the distance of any element of $\st Y\backslash (\st X\times\{\nu\})$ from $(p_0,\nu)$ is at least
$$\min\{\nu d(p_0,p_{\nu-1}), \nu d(p_0,p_{\nu+1})\}\gg \nu,$$
as $d(p_0,p_n)\to\infty$ and so $d(p_0,p_\mu)\gg 1$ for each infinite $\mu\in\st\N$. Therefore no element of $\st Y\backslash (\st X\times\{\nu\})$ projects onto an element of $Z$. What remains to prove is that for each $y\in\st X$ with $\widetilde{d}((p_0,\nu),(y,\nu))/\nu=d(p_0,y)\in O(1)$ there exists $x\in X$ such that $\widetilde{d}((x,\nu),(y,\nu))/\nu=d(x,y)\ll 1$. Consider $y$ as above and some $r>d(p_0,y)$, $r\in\R$. We have that $B=\overline{B}_X(p_0,r)$ is compact and $y\in\st B$. By the nonstandard characterization of compact metric spaces (Proposition~\ref{compactnonstchar:prop}), there exists $x\in B$ such that $d(x,y)\ll 1$, and we are done.
\par
The case that $X$ is bounded can be handled similarly. Fix $p\in X$ and set $Y=X\times\N$. Define
$$
\widetilde{d} ((x,n),(x',n'))=
\left\{
\begin{array}{lll}
n\cdot d(x,p) + n'\cdot d(p,x') + |n^2-(n')^2| & {\rm if} & n\neq n'\\
n\cdot d(x,x') & {\rm if} & n=n'
\end{array}
\right.
$$

Modifying the previous proof, it is easily shown that, for any infinite $\nu\in\st\N$, the asymptotic cone of $Y$ with basepoint $(p,\nu)$ and scaling factor $\nu$ is isometric to $X$.

\end{proof}

\subsection*{Aknowledgments}
The author is greatly indebted to his Master's thesis advisor Roberto Frigerio. He would also like to thank Cornelia Dru\c{t}u for several helpful comments.

\bibliographystyle{model1b-num-names}

\end{document}